\def\begmat{\left(\begin{array}}\def\endmat{\end{array}\right)}
\def\bi{\begin{itemize}\setlength{\itemsep}{0pt}} \def\ei{\end{itemize}}
\def\bl{\begin{list}{\labelitemi}{\leftmargin=1em}\setlength{\itemsep}{-2.5pt}}  \def\el{\end{list}}
\def\bn{\begin{enumerate}} \def\en{\end{enumerate}}
\def\bt{\begin{table}[h]} \def\et{\end{table}}
\def\bc{\begin{center}} \def\ec{\end{center}}
\def\T{{ \mathrm{\scriptscriptstyle T} }}
\newcommand{\abs}[1]{\left\vert#1\right\vert}
\newcommand{\norm}[1]{\left\Vert#1\right\Vert}
\newtheorem{theorem}{Theorem}[section]
\newtheorem{lemma}[theorem]{Lemma}
\newtheorem{proposition}[theorem]{Proposition}
\newcommand \bbP{\mathbb{P}}
\newcommand \bbE{\mathbb{E}}
\def\m{\mathcal}
\def\mb{\mathbb}
\theoremstyle{plain}
\newtheorem{thm}{Theorem}[section]
\theoremstyle{plain}
\theoremstyle{remark}
\theoremstyle{plain}
\begin{document}

\title{Dirichlet-Laplace priors for optimal shrinkage}
\author{Anirban Bhattacharya, Debdeep Pati, Natesh S. Pillai, David B. Dunson }
%
\maketitle

\begin{center}
\textbf{Abstract}
\end{center}
Penalized regression methods, such as $L_1$ regularization, are routinely used in high-dimensional applications, and there is a rich literature on optimality properties under sparsity assumptions.  In the Bayesian paradigm, sparsity is routinely induced through two-component mixture priors having a probability mass at zero, but such priors encounter daunting computational problems in high dimensions.  This has motivated an amazing variety of continuous shrinkage priors, which can be expressed as global-local scale mixtures of Gaussians, facilitating computation.  In sharp contrast to the frequentist literature, little is known about the properties of such priors and the convergence and concentration of the corresponding posterior distribution. In this article, we propose a new class of  Dirichlet--Laplace (DL) priors, which possess optimal posterior concentration and lead to efficient posterior computation exploiting results from normalized random measure theory.  Finite sample performance of Dirichlet--Laplace priors relative to alternatives is assessed in simulated and real data examples.
\vspace*{.3in}

\noindent\textsc{Keywords}: {Bayesian; Convergence rate; High dimensional; Lasso; $L_1$; Penalized regression; Regularization; Shrinkage prior.}

\section{Introduction}

High-dimensional data have become commonplace in broad application areas, and there is an exponentially increasing literature on statistical and computational methods for big data.  In such settings, it is well known that classical methods such as maximum likelihood estimation break down, motivating a rich variety of alternatives based on penalization and thresholding.  There is a rich theoretical literature justifying the optimality properties of such penalization approaches \cite{zhao2007model,van2008high,zhang2008sparsity,meinshausen2009lasso,raskutti2011minimax,negahban2010unified}, with fast algorithms \cite{efron2004least} and compelling applied results leading to routine use of $L_1$ regularization in particular.

The overwhelming emphasis in this literature has been on rapidly producing a point estimate with good empirical and theoretical properties.  However, in many applications, it is crucial to obtain a realistic characterization of uncertainty in the estimates of parameters and functions of the parameters, and in predictions of future outcomes.  Usual frequentist approaches to characterize uncertainty, such as constructing asymptotic confidence regions or using the bootstrap, can break down in high-dimensional settings.  For example, in regression when the number of subjects $n$ is much less than the number of predictors $p$, one cannot naively appeal to asymptotic normality and resampling from the data may not provide an adequate characterization of uncertainty.  

Most penalization approaches have a Bayesian interpretation as corresponding to the mode of a posterior distribution obtained under a shrinkage prior.  For example, the wildly popular Lasso/$L_1$ regularization approach to regression \cite{tibshirani1996regression} is equivalent to maximum {\em a posteriori} (MAP) estimation under a Gaussian linear regression model having a double exponential (Laplace) prior on the coefficients.  Given this connection, it is natural to ask whether we can use the entire posterior distribution to provide a probabilistic measure of uncertainty.  In addition to providing a characterization of uncertainty, a Bayesian perspective has distinct advantages in terms of tuning parameter choice, allowing key penalty parameters to be marginalized over the posterior distribution instead of relying on cross-validation.  In addition, by inducing penalties through shrinkage priors, important new classes of penalties can be discovered that may outperform usual $L_q$-type choices.

From a frequentist perspective, we would like to be able to choose a default shrinkage prior that leads to similar optimality properties to those shown for $L_1$ penalization and other approaches.  However, instead of showing that a particular penalty leads to a point estimator having a minimax optimal rate of convergence under sparsity assumptions, we would rather like to show that the entire posterior distribution concentrates at the optimal rate, i.e., the posterior probability assigned to a shrinking neighborhood of the true parameter value converges to one, with the neighborhood size proportional to the frequentist minimax rate.  

An amazing variety of shrinkage priors have been proposed in the Bayesian literature; however with essentially no theoretical justification for the performance of these priors in the high-dimensional settings for which they were designed.  \cite{ghosal1999asymptotic} and \cite{bontemps2011bernstein} provided conditions on the prior for asymptotic normality of linear regression coefficients allowing the number of predictors $p$ to increase with sample size $n$, with \cite{ghosal1999asymptotic} requiring a very slow rate of growth and \cite{bontemps2011bernstein} assuming $p \le n$.  These
results required the prior to be sufficiently flat in a neighborhood of the true parameter value, essentially ruling out shrinkage priors.  \cite{armagan2011generalized} considered shrinkage priors in providing simple sufficient conditions for posterior consistency in linear regression where the number of variables grows slower than the sample size, though no rate of contraction was provided. 

In studying posterior contraction in high-dimensional settings, it becomes clear that it is critical to understand several aspects of the prior distribution on the high-dimensional space, including (but not limited to) the prior concentration around sparse vectors and the implied dimensionality of the prior.  Specifically, studying the reduction in dimension induced by shrinkage priors is challenging due to the lack of exact zeros, with the prior draws being sparse in only an approximate sense.  This substantial technical hurdle has prevented any previous results (to our knowledge) on posterior concentration in high-dimensional settings for shrinkage priors.  In fact, investigating these properties is critically important not just in studying frequentist optimality properties of Bayesian procedures but for Bayesians in obtaining a better understanding of the behavior of their priors and choosing associated hyperparameters.  Without such technical handle, it becomes an art to use intuition and practical experience to indirectly induce a shrinkage prior, while focusing on Gaussian scale families for computational tractability.  Some beautiful classes of priors have been proposed by \cite{griffin2010inference,carvalho2010horseshoe,armagan2011generalized} among others, with \cite{polson2010shrink} showing that essentially all existing shrinkage priors fall within the Gaussian global-local scale mixture family.  One of our primary goals is to obtain theory that can allow evaluation of existing priors and design of novel priors, which are appealing from a Bayesian perspective in allowing incorporation of prior knowledge and from a frequentist perspective in leading to minimax optimality under weak sparsity assumptions.


\section{A new class of  shrinkage priors} \label{sec:prior_prop}

\subsection{\bf{Bayesian sparsity priors in normal means problem}}

For concreteness, we focus on the widely studied normal means problem (see, for example, \cite{donoho1992maximum,johnstone2004needles,castilloneedles} and references therein); although most of the ideas developed in this paper generalize directly to high-dimensional linear and generalized linear models. In the normal means setting, one aims to estimate a $n$-dimensional mean\footnote{Following standard practice in this literature, we use $n$ to denote the dimensionality and it should not be confused with the sample size.} based on a single observation corrupted with i.i.d. standard normal noise:
\begin{align}\label{eq:norm_means}
 y_i &= \theta_i + \epsilon_i, \quad \epsilon_i \sim \mbox{N}(0, 1), \quad 1 \leq i \leq n. 
\end{align}
Let $l_0[q; n]$ denote the subset of $\mathbb{R}^n$ given by
$$l_0[q; n] = \{ \theta \in \mathbb{R}^n ~:~ \#(1 \leq j \leq n : \theta_j \neq 0) \leq q\}.$$
For a vector $ x \in \mathbb{R}^r$, let $\norm{x}_2$ denote its Euclidean norm. If the true mean $\theta_0$ is $q_n$-sparse, i.e., $\theta_0 \in l_0[q_n; n]$, with $q_n = o(n)$, the {\em squared minimax rate} in estimating $\theta_0$ in $l_2$ norm is known \cite{donoho1992maximum} to be $2q_n \log(n/q_n)(1 + o(1))$, i.e.\footnote{Given sequences $a_n, b_n$, we denote $a_n = O(b_n)$ or $a_n \lesssim b_n$ if there exists a global constant $C$ such that $a_n \leq C b_n$ and $a_n = o(b_n)$ if $a_n/b_n \to 0$ as $n \to \infty$.}
\begin{align}\label{eq:minmax}
\inf_{\hat{\theta}} \sup_{\theta_0 \in l_0[q_n; n]} E_{\theta_0} \| \hat{\theta} - \theta_0 \|_2^2 \asymp q_n \log(n/q_n). 
\end{align}
In the above display, $E_{\theta_0}$ denotes an expectation with respect to a $\mbox{N}_n(\theta_0, \mathrm{I}_n)$ density. 
In the presence of sparsity, one thus only looses a logarithmic factor (in the ambient dimension) as a penalty for not knowing the locations of the zeroes. Moreover, \eqref{eq:minmax} implies that one only needs a number of replicates in the order of the sparsity to consistently estimate the mean. Appropriate thresholding/penalized estimators achieve the minimax rate \eqref{eq:minmax}; for a somewhat comprehensive list of {\em minimax-optimal} estimators, refer to \cite{castilloneedles}. 

For a subset $S \subset \{1, \ldots, n\}$, let $|S|$ denote the cardinality of $S$ and define $\theta_S = (\theta_j : j \in S)$ for a vector $\theta \in \mathbb{R}^n$. 
Denote $\mbox{supp}(\theta)$ to be the \emph{support} of $\theta$, the subset of $\{1, \ldots, n\}$ corresponding to the non-zero entries of $\theta$. For a high-dimensional vector $\theta \in \mathbb{R}^n$, a natural way to incorporate sparsity in a Bayesian framework is to use point mass mixture priors:
\begin{align}\label{eq:point_mass}
\theta_j \sim (1 - \pi) \delta_0 + \pi g_{\theta}, \quad j = 1, \ldots, n ,
\end{align}
where $\pi = \mbox{Pr}(\theta_j \neq 0)$, $\bbE\{ |\mbox{supp}(\theta)|  \mid \pi\} = n \pi$ is the prior guess on model size (sparsity level), and $g_{\theta}$ is an absolutely continuous density on $\mathbb{R}$. These priors are highly appealing in allowing separate control of the level of sparsity and the size of the signal coefficients.  If the sparsity parameter $\pi$ is estimated via empirical Bayes, the posterior median of $\theta$ is a minimax-optimal estimator \cite{johnstone2004needles} which can adapt to arbitrary sparsity levels as long as $q_n = o(n)$. 

In a fully Bayesian framework, it is common to place a beta prior on $\pi$, leading to a beta-Bernoulli prior on the model size, which conveys an automatic multiplicity adjustment \citep{scott2010bayes}. In a beautiful recent paper, \cite{castilloneedles} established that prior (\ref{eq:point_mass}) with an appropriate beta prior on $\pi$ and suitable tail conditions on $g_{\theta}$ leads to a minimax optimal rate of {\em posterior contraction}, i.e., the posterior concentrates most of its mass on a ball around $\theta_0$ of squared radius of the order of $q_n \log (n/q_n)$:
\begin{align}\label{eq:post_conc}
E_{\theta_0} \bbP(\norm{\theta - \theta_0}_2 < M s_n \mid y) \to 1,  \, \text{as} \, n \to \infty,
\end{align}
where $M > 0$ is a constant and $s_n^2 = q_n \log(n/q_n)$.   
\cite{naveen2013}  obtained consistency in model selection using point-mass mixture priors with appropriate data-driven hyperparameters.  

\subsection{\bf{Global-local shrinkage rules}}

Although point mass mixture priors are intuitively appealing and possess attractive theoretical properties, posterior sampling requires a stochastic search over an enormous space, leading to slow mixing and convergence \cite{polson2010shrink}. Computational issues and consideration that many of the $\theta_j$s may be small but not exactly zero has motivated a rich literature on continuous shrinkage priors; for some flavor of the vast literature refer to \citep{park2008bayesian,carvalho2010horseshoe,griffin2010inference,hans2011elastic,armagan2011generalized}.
\citep{polson2010shrink} noted that essentially all such shrinkage priors can be represented as global-local (GL) mixtures of Gaussians, 
\begin{align}\label{eq:lg}
\theta_j \sim \mbox{N}(0, \psi_j \tau), \quad \psi_j \sim f, \quad \tau \sim g,  
\end{align}
where $\tau$ controls global shrinkage towards the origin while the local scales $\{ \psi_j \}$ allow deviations in the degree of shrinkage.  If $g$ puts sufficient mass near zero and $f$ is appropriately chosen, GL priors in (\ref{eq:lg}) can intuitively approximate (\ref{eq:point_mass}) but through a continuous density concentrated near zero with heavy tails. 

GL priors potentially have substantial computational advantages over point mass priors, since the normal scale mixture representation allows for conjugate updating of $\theta$ and $\psi$ in a block. Moreover, a number of frequentist regularization procedures such as ridge, lasso, bridge and elastic net correspond to posterior modes under GL priors with appropriate choices of $f$ and $g$. For example, one obtains a double-exponential prior corresponding to the popular $L_1$ or lasso penalty if $f$ is an exponential distribution. However, unlike point mass priors (\ref{eq:point_mass}), many aspects of shrinkage priors are poorly understood, with the lack of exact zeroes compounding the difficulty in studying basic properties, such as prior expectation, tail bounds for the number of large signals, and prior concentration around sparse vectors. Hence, subjective Bayesians face difficulties in incorporating prior information regarding sparsity, and frequentists tend to be skeptical due to the lack of theoretical justification. 

This skepticism is somewhat warranted, as it is clearly the case that reasonable seeming priors can have poor performance in high-dimensional settings.  For example, choosing $\pi=1/2$ in prior (\ref{eq:point_mass}) leads to an exponentially small prior probability of $2^{-n}$ assigned to the null model, so that it becomes literally impossible to override that prior informativeness with the information in the data to pick the null model. However, with a beta prior on $\pi$, this problem can be avoided \citep{scott2010bayes}.  In the same vein, if one places i.i.d. $\mbox{N}(0, 1)$ priors on the entries of $\theta$, then the induced prior on $\norm{\theta}$ is highly concentrated around $\sqrt{n}$ leading to misleading inferences on $\theta$ almost everywhere.  Although these are simple examples, similar {\em multiplicity problems} \citep{scott2010bayes} can transpire more subtly in cases where complicated models/priors are involved and hence it is fundamentally important to understand properties of the prior and the posterior in the setting of \eqref{eq:norm_means}.

There has been a recent awareness of these issues, motivating a basic assessment of the marginal properties of shrinkage priors for a single $\theta_j$. Recent priors such as the horseshoe \cite{carvalho2010horseshoe} and generalized double Pareto \cite{armagan2011generalized} are carefully formulated to obtain marginals having a high concentration around zero with heavy tails.  This is well justified, but as we will see below, such marginal behavior alone is not sufficient; it is necessary to study the joint distribution of $\theta$ on $\mathbb{R}^n$.  With such motivation, we propose a class of Dirichlet-kernel priors in the next subsection. 

\subsection{\bf{Dirichlet-kernel priors}}

Let $\phi_0$ denote the standard normal density on $\mathbb{R}$. Also, let $\mbox{DE}(\tau)$ denote a zero mean double-exponential or Laplace distribution with density $f(y) = (2 \tau)^{-1} e^{- \abs{y}/\tau}$ for $y \in \mathbb{R}$. 

Let us revisit the global-local specification (\ref{eq:lg}). Integrating out the local scales $\psi_j$'s, \eqref{eq:lg} can be equivalently represented as a global scale mixture of a kernel $\mathcal{K}(\cdot)$,
\begin{eqnarray}\label{eq:scaledkernel}
\theta_j \stackrel{\text{i.i.d.}}{\sim} \mathcal{K}(\cdot \; , \tau), \quad \tau \sim g,
\end{eqnarray}
where $\mathcal{K}(x) = \int \psi^{-1/2} \phi_0(x/\sqrt{\psi}) g(\psi) d\psi $ is a symmetric unimodal density (or kernel) on $\mathbb{R}$ and $\mathcal{K}(x, \tau) := \tau^{-1/2} \mathcal{K}(x/\sqrt{\tau})$.
For example, $\psi_j \sim \mbox{Exp}(1/2)$ corresponds to a double-exponential kernel $\mathcal{K} \equiv \mbox{DE}(1)$, while $\psi_j \sim \mbox{IG}(1/2,1/2)$ results in a standard Cauchy kernel $\mathcal{K} \equiv \mbox{Ca}(0, 1)$. 

These traditional choices lead to a kernel which is \emph{bounded} in a neighborhood of zero. However, if one instead uses a half Cauchy prior $\psi_j^{1/2} \sim \mbox{Ca}_+(0,1)$, then the resulting horseshoe kernel \cite{carvalho2010horseshoe,carvalho2009handling} is unbounded with a singularity at zero. This phenomenon coupled with tail robustness properties leads to excellent empirical performance of the horseshoe. However, the joint distribution of $\theta$ under a horseshoe prior is understudied and further theoretical investigation is required to understand its operating characteristics. One can imagine that it concentrates more along sparse regions of the parameter space compared to common shrinkage priors since the singularity at zero potentially allows most of the entries to be concentrated around zero with the heavy tails ensuring concentration around the relatively small number of signals. 
 
 The above class of priors rely on obtaining a suitable kernel $\mathcal{K}$ through appropriate normal scale mixtures. In this article, we offer a fundamentally different class of shrinkage priors that alleviate the requirements on the kernel, while having attractive theoretical properties. In particular, our proposed class of Dirichlet-kernel (Dk) priors replaces the single global scale $\tau$ in \eqref{eq:scaledkernel} by a vector of scales $(\phi_1\tau, \ldots, \phi_n \tau)$, where $\phi = (\phi_1, \ldots, \phi_n)$ is constrained to lie in the $(n-1)$ dimensional simplex $\mathcal{S}^{n-1} = \{ x = (x_1, \ldots, x_{n})^{\T} : x_j \geq 0, \sum_{j=1}^{n} x_j = 1\}$ and is assigned a $\mbox{Dir}(a, \ldots, a)$ prior:
\begin{eqnarray}\label{eq:kd}
\theta_j \mid \phi_j, \tau \sim \mathcal{K}(\cdot \;, \phi_j \tau), \quad \phi \sim \mathrm{Dir}(a, \ldots, a).
\end{eqnarray}
In \eqref{eq:kd}, $\mathcal{K}$ is any symmetric (about zero) unimodal density with exponential or heavier tails; for computational purposes, we shall restrict attention to the class of kernels that can be represented as scale mixture of normals \cite{west1987scale}. While previous shrinkage priors in the literature obtain marginal behavior similar to the point mass mixture priors \eqref{eq:point_mass}, our construction aims at resembling the \emph{joint distribution} of $\theta$ under a two-component mixture prior. 
Constraining $\phi$ on $\mathcal{S}^{n-1}$ restrains the degrees of freedom of the $\phi_j$'s, offering better control on the number of dominant entries in $\theta$. In particular, letting $\phi \sim \mbox{Dir}(a, \ldots, a)$ for a suitably chosen $a$ allows (\ref{eq:kd}) to behave like \eqref{eq:point_mass} jointly, forcing a large subset of $(\theta_1, \ldots, \theta_n)$ to be \emph{simultaneously} close to zero with high probability.  

We focus on the Laplace kernel from now on for concreteness, noting that all the results stated below can be generalized to other choices. The corresponding hierarchical prior given $\tau$,
\begin{align}\label{eq:laplace_dir}
\theta_j \mid \phi, \tau \sim \mathrm{DE}(\phi_j \tau), \quad \phi \sim \mathrm{Dir}(a, \ldots, a),
\end{align}
is referred to as a Dirichlet--Laplace prior, denoted $\theta \mid \tau \sim \mbox{DL}_{a}(\tau)$. 

To understand the role of $\phi$, we undertake a study of the marginal properties of $\theta_j$ conditional on $\tau$, integrating out $\phi_j$. 
The results are summarized in Proposition \ref{propWG} below. 
\begin{proposition}\label{propWG}
If $\theta \mid \tau \sim \mathrm{DL}_{a}(\tau)$, then the marginal distribution of $\theta_j$ given $\tau$ is unbounded with a singularity at zero for any $a < 1$. Further, in the special case $a = 1/n$, the marginal distribution is a wrapped Gamma distribution $\mathrm{WG}(\tau^{-1},1/n)$, where $\mbox{WG}(\lambda, \alpha)$ has a density $f(x; \lambda, \alpha) \propto \abs{x}^{\alpha-1} e^{-\lambda \abs{x}}$ on $\mathbb{R}$. 
\end{proposition}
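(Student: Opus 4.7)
The plan is to reduce the claim to a single one-dimensional integral by first marginalizing out $\phi_{-j}$. Standard properties of the symmetric Dirichlet give $\phi_j\sim\mathrm{Beta}(a,(n-1)a)$, and absorbing the $\phi^{-1}$ factor from the Laplace kernel yields
\begin{equation*}
p(\theta_j\mid\tau)
\;=\;\frac{1}{2\tau\,B\!\big(a,(n-1)a\big)}\int_0^1 \phi^{\,a-2}(1-\phi)^{(n-1)a-1}\,\exp\!\big\{-|\theta_j|/(\phi\tau)\big\}\,d\phi.
\end{equation*}
The factor $\phi^{a-2}$ (rather than $\phi^{a-1}$) is the source of the singularity: for $a<1$ the integrand already fails to be integrable at $\phi=0$, and only the exponential cutoff keeps the integral finite whenever $\theta_j\ne 0$.

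The core manoeuvre is then the tandem substitution $u=|\theta_j|/(\phi\tau)$ followed by the shift $v=u-|\theta_j|/\tau$. The first pulls out a prefactor $(|\theta_j|/\tau)^{a-1}$ and sends the interval of integration to $(|\theta_j|/\tau,\infty)$; the second moves the lower limit to $0$ and peels off $e^{-|\theta_j|/\tau}$. Routine algebra (using $1-|\theta_j|/(u\tau)=v/u$) then produces
\begin{equation*}
p(\theta_j\mid\tau)
\;=\;\frac{(|\theta_j|/\tau)^{a-1}\,e^{-|\theta_j|/\tau}}{2\tau\,B(a,(n-1)a)}\int_0^{\infty}(v+|\theta_j|/\tau)^{1-na}\,v^{(n-1)a-1}\,e^{-v}\,dv.
\end{equation*}
To establish unboundedness for general $a<1$, I would send $|\theta_j|\to 0$: the integrand converges pointwise to $v^{-a}e^{-v}$, and an integrable majorant independent of $|\theta_j|$ (valid whenever $|\theta_j|/\tau\le 1$) is obtained by a case split on the sign of $1-na$ --- namely $v^{-a}e^{-v}$ for $a>1/n$ and $(v+1)^{1-na}v^{(n-1)a-1}e^{-v}$ for $a\le 1/n$, both integrable on $(0,\infty)$ for $a\in(0,1)$ (since $(n-1)a>0$). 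Dominated convergence then yields the limit $\Gamma(1-a)\in(0,\infty)$, whence $p(\theta_j\mid\tau)\sim C\,|\theta_j|^{a-1}$ near zero, which diverges.

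For $a=1/n$ the exponent $1-na$ vanishes, so $(v+|\theta_j|/\tau)^{1-na}\equiv 1$ and no limit argument is required: the inner integral is exactly $\int_0^\infty v^{-1/n}e^{-v}\,dv=\Gamma((n-1)/n)$. Combining with $B(1/n,(n-1)/n)=\Gamma(1/n)\Gamma((n-1)/n)$ and simplifying gives
\begin{equation*}
p(\theta_j\mid\tau)\;=\;\frac{\tau^{-1/n}}{2\,\Gamma(1/n)}\,|\theta_j|^{1/n-1}\,e^{-|\theta_j|/\tau},
\end{equation*}
which is exactly the $\mathrm{WG}(\tau^{-1},1/n)$ density. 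The only step that really needs care is the dominated-convergence argument for the $a<1$ case, since both the limit of integration and the $(v+|\theta_j|/\tau)^{1-na}$ factor depend on $|\theta_j|$; the case split above is what carries that load, and the $a=1/n$ identification is then a clean algebraic collapse.
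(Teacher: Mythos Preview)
Your proof is correct and proceeds by a genuinely different route from the paper's. The paper uses the substitution $z=\phi/(1-\phi)$ to obtain
\[
p(\theta_j\mid\tau)\;\propto\;e^{-|\theta_j|/\tau}\int_0^\infty e^{-|\theta_j|/(\tau z)}\,z^{a-2}(1+z)^{1-na}\,dz,
\]
then for general $a<1$ simply bounds below by the contribution from $z\in[0,1]$ (where $(1+z)^{1-na}$ is bounded away from $0$) and invokes monotone convergence to show that $\int_0^1 e^{-|\theta_j|/(\tau z)}z^{a-2}\,dz\uparrow\int_0^1 z^{a-2}\,dz=\infty$ as $|\theta_j|\to 0$. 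For $a=1/n$ the $(1+z)^{1-na}$ factor disappears and a further substitution yields the wrapped-Gamma form up to proportionality.

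Your tandem substitution $u=|\theta_j|/(\phi\tau)$, $v=u-|\theta_j|/\tau$ instead isolates the exact prefactor $|\theta_j|^{a-1}e^{-|\theta_j|/\tau}$ and leaves a residual integral whose limit you identify via dominated convergence (with the case split on the sign of $1-na$) as $\Gamma(1-a)$. This buys you more than the paper proves: you obtain the sharp asymptotic $p(\theta_j\mid\tau)\sim C\,|\theta_j|^{a-1}$ near zero for every $a<1$, not merely unboundedness, and for $a=1/n$ you recover the normalizing constant $\tau^{-1/n}/\{2\Gamma(1/n)\}$ explicitly rather than only the proportionality. The paper's argument is slightly shorter for the bare unboundedness claim (monotone convergence on a truncated integral, no majorant needed), but your approach is cleaner for the exact $a=1/n$ identification and gives strictly finer information in the general case.
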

Thus, marginalizing over $\phi$, we obtain an unbounded kernel $\mathcal{K}$, so that the marginal density of $\theta_j \mid \tau $ has a singularity at 0 while retaining exponential tails. A proof of Proposition \ref{propWG} can be found in the appendix.   

The parameter $\tau$ plays a critical role in determining the tails of the marginal distribution of $\theta_j$'s. We consider a fully Bayesian framework where $\tau$ is assigned a prior $g$ on the positive real line and learnt from the data through the posterior. Specifically, we assume a $\mbox{gamma}(\lambda, 1/2)$ prior on $\tau$ with $\lambda = n a$. We continue to refer to the induced prior on $\theta$ implied by the hierarchical structure,
\begin{align}\label{eq:laplace_dir_tau}
\theta_j \mid \phi, \tau \sim \mathrm{DE}(\phi_j \tau), \quad \phi \sim \mathrm{Dir}(a, \ldots, a), \quad \tau \sim \mbox{gamma}(na, 1/2),
\end{align}
as a Dirichlet--Laplace prior, denoted $\theta \sim \mbox{DL}_a$. 

There is a recent frequentist literature on  including a local penalty specific to each coefficient. The adaptive Lasso \citep{zou2006adaptive,wang2007unified} relies on empirically estimated weights that are plugged in.  \cite{leng2010variable} instead propose to sample the penalty parameters from a posterior, with a sparse point estimate obtained for each draw.  These approaches do not produce a full posterior distribution but focus on sparse point estimates.




\subsection{\bf Posterior computation}\label{sec:postcomp}
 
The proposed class of DL priors leads to straightforward posterior computation via an efficient data augmented Gibbs sampler.  Note that the $\mbox{DL}_a$ prior \eqref{eq:laplace_dir_tau} can be equivalently represented as
\begin{align*}
\theta_j \sim \mbox{N}(0, \psi_j \phi_j^2 \tau^2), \, \psi_j \sim \mathrm{Exp}(1/2), \, \phi \sim \mbox{Dir}(a, \ldots, a), \, \tau \sim \mbox{gamma}(na, 1/2).
\end{align*}

We detail the steps in the normal means setting noting that the algorithm is trivially modified to accommodate normal linear regression, robust regression with heavy tailed residuals, probit models, logistic regression, factor models and other hierarchical Gaussian cases.  To reduce auto-correlation, we rely on marginalization and blocking as much as possible.  Our sampler cycles through (i) $\theta \mid \psi, \phi, \tau, y$, (ii) $\psi \mid \phi, \tau, \theta$, (iii) $\tau \mid \phi, \theta$ and (iv) $\phi \mid \theta$. We use the fact that the joint posterior of $(\psi, \phi, \tau)$ is conditionally independent of $y$ given $\theta$. Steps (ii) - (iv) together give us a draw from the conditional distribution of $(\psi, \phi, \tau) \mid \theta$, since 
\begin{align*}
[\psi, \phi, \tau \mid \theta] = [\psi \mid \phi, \tau, \theta] [\tau \mid \phi, \theta] [\phi \mid \theta] .
\end{align*}
 
Steps (i) -- (iii) are standard and hence not derived. Step (iv) is non-trivial and we develop an efficient sampling algorithm for jointly sampling $\phi$. Usual one at a time updates of a Dirichlet vector leads to tremendously slow mixing and convergence, and hence the joint update in Theorem \ref{thm:post_comp} is an important feature of our proposed prior; a proof can be found in the Appendix. Consider the following parametrization for the three-parameter generalized inverse Gaussian (giG) distribution: $Y \sim \mbox{giG}(\lambda, \rho, \chi)$ if $f(y) \propto y^{\lambda -1} e^{ - 0.5 (\rho y + \chi/y)}$ for $y > 0$. 
\begin{theorem}\label{thm:post_comp}
The joint posterior of $\phi \mid \theta$ has the same distribution as $(T_1/T, \ldots, T_n/T)$, where $T_j$ are independently distributed according to a $\mbox{giG}(a-1, 1, 2 | \theta_j| )$ distribution, and $T = \sum_{j=1}^n T_j$. 
\end{theorem}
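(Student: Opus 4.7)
The plan is to avoid direct marginalization of $\tau$ (which would involve a modified Bessel function) and instead match two joint densities. Specifically, I will show that the joint law of $(\phi,\tau)\mid\theta$ coincides with the joint law of $(T_1/T,\ldots,T_n/T,\,T)$, where $T_j \sim \mbox{giG}(a-1,1,2|\theta_j|)$ are independent and $T=\sum_{j=1}^n T_j$. The theorem then follows immediately by reading off the $\phi$-marginal.

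First, I would write down the joint posterior of $(\phi,\tau)\mid\theta$ under \eqref{eq:laplace_dir_tau}. The Laplace likelihood contributes $\prod_j (2\phi_j\tau)^{-1}\exp(-|\theta_j|/(\phi_j\tau))$, the Dirichlet prior on $\phi$ contributes $\prod_j \phi_j^{a-1}$, and the $\mbox{gamma}(na,1/2)$ prior on $\tau$ contributes $\tau^{na-1}e^{-\tau/2}$. Collecting powers of $\tau$ (namely $-n$ from the kernel plus $na-1$ from the gamma) and of each $\phi_j$ (namely $-1$ from the kernel plus $a-1$ from the Dirichlet) gives
\begin{align*}
p(\phi,\tau\mid\theta) \propto \Bigl(\prod_{j=1}^n \phi_j^{a-2}\Bigr)\, \tau^{na-n-1}\,\exp\Bigl\{ -\frac{1}{\tau}\sum_{j=1}^n\frac{|\theta_j|}{\phi_j} - \frac{\tau}{2}\Bigr\}
\end{align*}
on $\mathcal{S}^{n-1}\times(0,\infty)$.

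Next, I would start from independent $T_j\sim \mbox{giG}(a-1,1,2|\theta_j|)$, with joint density proportional to $\prod_j T_j^{a-2}\exp\{-T_j/2 - |\theta_j|/T_j\}$, and apply the change of variables $T_j=T\phi_j$ with $\phi\in\mathcal{S}^{n-1}$ and $T>0$. The Jacobian is the standard factor $T^{n-1}$ from the gamma-to-Dirichlet construction, so
\begin{align*}
p(\phi,T) &\propto T^{n-1}\prod_{j=1}^n (T\phi_j)^{a-2}\exp\Bigl\{ -\frac{T}{2} - \frac{1}{T}\sum_{j=1}^n\frac{|\theta_j|}{\phi_j}\Bigr\} \\
&= \Bigl(\prod_{j=1}^n \phi_j^{a-2}\Bigr)\, T^{na-n-1}\,\exp\Bigl\{ -\frac{T}{2} - \frac{1}{T}\sum_{j=1}^n\frac{|\theta_j|}{\phi_j}\Bigr\}.
\end{align*}
This is identical to $p(\phi,\tau\mid\theta)$ under the correspondence $\tau\leftrightarrow T$, and since both are bona fide densities on the same space they must coincide. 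Marginalizing over $\tau$ (equivalently, over $T$) then delivers the stated representation of $\phi\mid\theta$.

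The only real difficulty is bookkeeping: the $\phi_j^{-1}$ factors arising because each Laplace kernel carries its own scale $\phi_j\tau$ must combine cleanly with the Dirichlet exponents and with the $T_j^{a-2}$ of the giG densities, and the exponent of $\tau$ in the posterior must be reconciled with the exponent of $T$ produced by the Jacobian together with $\prod_j T_j^{a-2}$. Once the two joint densities are aligned, the identification is immediate, no Bessel-function integral is ever computed, and as a bonus the argument also yields a joint draw of $(\phi,\tau)\mid\theta$ by first sampling the independent $T_j$ and then setting $\tau=\sum_j T_j$ and $\phi_j=T_j/\tau$.
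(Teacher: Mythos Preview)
Your argument is correct and is a close cousin of the paper's own proof. The paper writes the marginal posterior of $\phi\mid\theta$ with $\tau$ left inside an unevaluated integral, then invokes a general normalized random measure identity (if $T_j$ are independent with densities $f_j$ and $\phi_j=T_j/T$, then the simplex density of $\phi$ is $\int_0^\infty t^{n-1}\prod_j f_j(\phi_j t)\,dt$) and matches exponents to identify $f_j$ as $\mathrm{giG}(a-1,1,2|\theta_j|)$. You instead match at the level of the joint density of $(\phi,\tau)\mid\theta$ against the joint density of $(\phi,T)$ obtained from the same change of variables $T_j=T\phi_j$ with Jacobian $T^{n-1}$, and only then marginalize. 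The underlying computation is identical; what differs is packaging. Your route is more self-contained (no external lemma is cited) and, as you note, yields the bonus that $(\phi,\tau)\mid\theta$ can be sampled jointly by drawing the $T_j$ and setting $\tau=T$, $\phi_j=T_j/T$. The paper's route situates the result within the normalized random measure literature. Neither version ever evaluates the Bessel integral.
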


The summary of each step are finally provided below.
\begin{description}
\item[\textbf{(i)}] To sample $\theta \mid \psi, \phi, \tau, y$, draw $\theta_j$ independently from a $\mbox{N}(\mu_j, \sigma_j^2)$ distribution with 
\begin{align*}
\sigma_j^2 = \{1 + 1/(\psi_j \phi_j^2 \tau^2)\}^{-1}, \quad \mu_j = \{1 + 1/(\psi_j \phi_j^2 \tau^2)\}^{-1} y.
\end{align*}

\item[\textbf{(ii)}]  The conditional posterior of $\psi \mid \phi, \tau, \theta$ can be sampled efficiently in a block by independently sampling $\psi_j \mid \phi, \theta$ from an inverse-Gaussian distribution $\mbox{iG}(\mu_j, \lambda)$ with $\mu_j = \phi_j \tau/|\theta_j|, \lambda = 1$. 


\item[\textbf{(iii)}] Sample the conditional posterior of $\tau \mid \phi, \theta$ from a $\mbox{giG}(\lambda-n, 1, 2 \sum_{j=1}^n | \theta_j |/\phi_j )$ distribution. 

\item[\textbf{(iv)}] To sample $\phi \mid \theta$, draw $T_1, \ldots, T_n$ independently with $T_j \sim \mbox{giG}(a-1, 1, 2 | \theta_j| )$ and set $\phi_j = T_j/T$ with $T = \sum_{j=1}^n T_j$. 
\end{description}

\section{Concentration properties of Dirchlet--Laplace priors}

In this section, we study a number of properties of the joint density of the Dirichlet--Laplace prior $\mbox{DL}_a$ on $\mathbb{R}^n$ and investigate the implied rate of posterior contraction \eqref{eq:post_conc} in the normal means setting \eqref{eq:norm_means}. Recall the hierarchical specification of $\mbox{DL}_a$ from \eqref{eq:laplace_dir_tau}. Letting $\psi_j = \phi_j \tau$ for $j = 1, \ldots, n$, a standard result (see, for example, Lemma IV.3  of \cite{zhou2012negative}) implies that $\psi_j \sim \mbox{gamma}(a,1/2)$ independently for $j = 1, \ldots, n$. Therefore, \eqref{eq:laplace_dir_tau} can be alternatively represented as\footnote{This formulation only holds when $\tau \sim \mbox{gamma}(na, 1/2)$ and is not true for the general $\mbox{DL}_a(\tau)$ class with $\tau \sim g$.}
\begin{eqnarray}\label{eq:mingyuan}
\theta_j \mid \psi_j \sim \mbox{DE}(\psi_j), \quad \psi_j \sim \mbox{Ga}(a, 1/2). 
\end{eqnarray}
The formulation \eqref{eq:mingyuan} is analytically convenient since the joint distribution factors as a product of marginals and the marginal density can be obtained analytically in Proposition \ref{propBes} below. The proof follows from standard properties of the modified Bessel function \cite{gradshteyn1980corrected}; a proof is sketched in the Appendix. 
\begin{proposition}\label{propBes}
The marginal density $\Pi$ of $\theta_j$ for any $1 \leq j \leq n$ is given by
\begin{align}\label{eq:dens_marg}
\Pi(\theta_j) = \frac{1}{2^{(1+a)/2}\Gamma(a)} \abs{\theta_j}^{(a-1)/2} K_{1-a} \big(\sqrt{2 \abs{\theta_j}}\big),
\end{align}
where
\begin{align*}
K_{\nu}(x) = \frac{\Gamma(\nu + 1/2)(2x)^{\nu} }{\sqrt{\pi}} \int_{0} ^{\infty} \frac{\cos t}{(t^2 + x^2)^{\nu+ 1/2}} dt
\end{align*}
is the modified Bessel function of the second kind. 
\end{proposition}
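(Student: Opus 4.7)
The plan is to integrate out $\psi_j$ from the joint density implied by the hierarchical representation \eqref{eq:mingyuan} and recognize the resulting one-dimensional integral as an integral representation of the modified Bessel function $K_\nu$. Since \eqref{eq:mingyuan} already factorizes the joint prior as a product of marginals in $j$, it suffices to work with a single coordinate.

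First I would write out the joint density of $(\theta_j,\psi_j)$ as
\begin{align*}
p(\theta_j,\psi_j) \;=\; \frac{1}{2\psi_j} e^{-|\theta_j|/\psi_j} \cdot \frac{(1/2)^a}{\Gamma(a)}\, \psi_j^{a-1} e^{-\psi_j/2},
\end{align*}
and then collect powers of $\psi_j$ to get
\begin{align*}
\Pi(\theta_j) \;=\; \frac{1}{2^{a+1}\Gamma(a)} \int_0^\infty \psi_j^{a-2} \exp\!\Big(-\tfrac{\psi_j}{2} - \tfrac{|\theta_j|}{\psi_j}\Big)\, d\psi_j.
\end{align*}
The next step is to invoke the standard integral identity (see e.g.\ \cite{gradshteyn1980corrected})
\begin{align*}
\int_0^\infty x^{\nu-1} e^{-\alpha x - \beta/x}\, dx \;=\; 2\Big(\tfrac{\beta}{\alpha}\Big)^{\nu/2} K_\nu\!\big(2\sqrt{\alpha\beta}\big), \qquad \alpha,\beta>0,
\end{align*}
with $\nu=a-1$, $\alpha=1/2$ and $\beta=|\theta_j|$. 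This is the engine of the proof and follows directly from the integral representation of $K_\nu$ stated in the proposition after a change of variables $x = \sqrt{\beta/\alpha}\, t$.

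After substituting, the integral becomes $2\,(2|\theta_j|)^{(a-1)/2} K_{a-1}\!\big(\sqrt{2|\theta_j|}\big)$, and I would then use the symmetry $K_{\nu}=K_{-\nu}$ to replace $K_{a-1}$ by $K_{1-a}$. The final step is the bookkeeping of constants: the factor $2^{-(a+1)}$ out front combines with the $2\cdot 2^{(a-1)/2}$ produced by the integral to yield $2^{-(a+1)/2}$, reproducing the stated formula. There is no real conceptual obstacle here; the only potentially fiddly point is making sure the exponent of $2$ comes out to $-(a+1)/2$, so I would double-check this calculation explicitly before writing up the identification with \eqref{eq:dens_marg}.
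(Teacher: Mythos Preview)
Your proof is correct and follows exactly the paper's route: integrate out $\psi_j$ from the hierarchical representation \eqref{eq:mingyuan} and identify the resulting integral as a standard Bessel-function formula (the paper simply cites 8.432.7 of \cite{gradshteyn1980corrected}). One small inaccuracy worth fixing: the identity $\int_0^\infty x^{\nu-1} e^{-\alpha x - \beta/x}\,dx = 2(\beta/\alpha)^{\nu/2} K_\nu(2\sqrt{\alpha\beta})$ is itself a standard integral representation of $K_\nu$ and does not follow from the cosine-integral form quoted in the proposition by a mere change of variables, so cite it directly rather than claiming it is a consequence of that representation.
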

Figure \ref{fig:plot_marg} plots the marginal density \eqref{eq:dens_marg} to compare with other common shrinkage priors. 

\begin{figure}[h!]
\begin{center}
\includegraphics[width=5in]{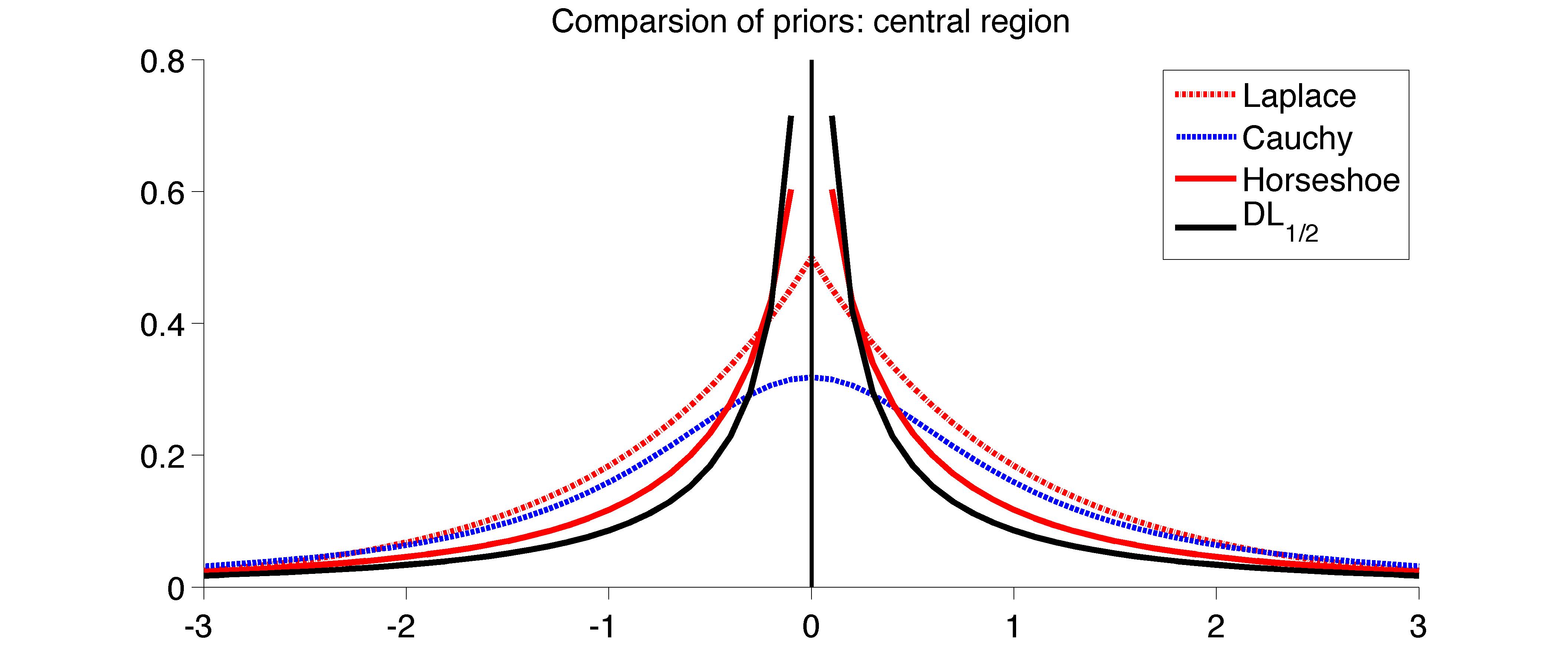} 
\includegraphics[width=5in]{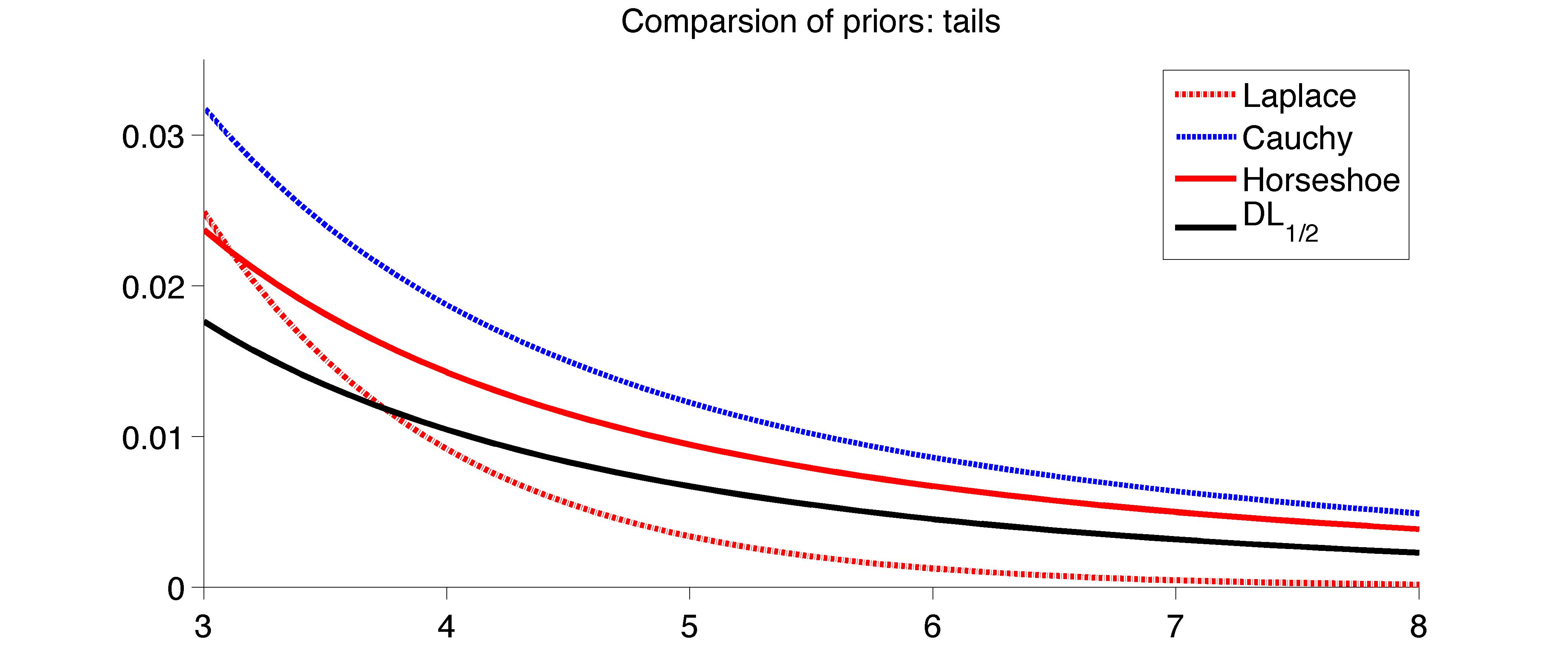}
\caption{Marginal density of the DL prior with $a = 1/2$ in comparison to other shrinkage priors. }
\end{center}
\label{fig:plot_marg}
\end{figure}

We shall continue to denote the joint density of $\theta$ on $\mb R^n$ by $\Pi$, so that $\Pi(\theta) = \prod_{j=1}^n \Pi(\theta_j)$. For a subset $S \subset \{1, \ldots, n\}$, let $\Pi_S$ denote the marginal distribution of $\theta_S = \{\theta_j : j \in S\} \in \mb R^{|S|}$. For a Borel set $A \subset \mb R^n$, let $\bbP(A) = \int_{A} \Pi(\theta) d\theta$ denote the prior probability of $A$, and $\bbP(A \mid y^{(n)})$ the posterior probability given data $y^{(n)} = (y_1, \ldots, y_n)$ and the model \eqref{eq:norm_means}. Finally, let $E_{\theta_0}/P_{\theta_0}$ respectively indicate an expectation/probability w.r.t. the $\mbox{N}_n(\theta_0, \mathrm{I}_n)$ density. We now establish that under mild restrictions on $\norm{\theta_0}$, the posterior arising from the $\mbox{DL}_a$ prior \eqref{eq:laplace_dir_tau} contracts at the minimax rate of convergence for an appropriate choice of the Dirichlet concentration parameter $a$. 
\begin{thm}\label{thm:minimaxDL}
Consider model \eqref{eq:norm_means} with $\theta \sim \mathrm{DL}_{a_n}$ as in \eqref{eq:laplace_dir_tau}, where $a_n = n^{-(1 + \beta)}$ for some $\beta > 0$ small. Assume $\theta_0 \in l_0[q_n; n]$ with $q_n = o(n)$ and $\norm{\theta_0}_2^2 \leq q_n \log^4 n$. Then, with $s_n^2 = q_n \log(n/q_n)$ and for some constant $M > 0$,
\begin{align}\label{eq:dl_cons}
\lim_{n \to \infty} E_{\theta_0} \bbP(\norm{\theta - \theta_0}_2 < M s_n \mid y) = 1. 
\end{align}
If $a_n = 1/n$ instead, then \eqref{eq:dl_cons} holds when $q_n \gtrsim \log n$. 
\end{thm}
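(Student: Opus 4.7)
The plan is to verify the three standard sufficient conditions of Ghosal, Ghosh and van der Vaart for posterior contraction at rate $s_n$: (i) prior concentration around $\theta_0$ at level $\exp(-Cs_n^2)$; (ii) existence of a sieve $\Theta_n \subset \mathbb{R}^n$ with $\log$-covering number $\lesssim s_n^2$; and (iii) prior mass of $\Theta_n^c$ bounded by $\exp(-C' s_n^2)$ with $C' > C+4$. Since the noise is i.i.d.\ $\mathrm{N}(0,1)$, testing is free (the Kullback--Leibler divergence and the squared $\ell_2$ distance coincide up to constants), so only prior mass conditions need real work. Throughout, the key structural input is the i.i.d.\ representation \eqref{eq:mingyuan}: marginalizing out $\phi$, the coordinates $\theta_j$ are independent with $\theta_j \mid \psi_j \sim \mathrm{DE}(\psi_j)$ and $\psi_j \stackrel{\text{iid}}{\sim} \mathrm{Ga}(a_n, 1/2)$. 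Very small $a_n$ forces each $\psi_j$ to concentrate sharply at $0$, hence each $\theta_j$ to be essentially $0$, which is the mechanism by which the continuous prior mimics sparsity.

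For (i), let $S_0 = \mathrm{supp}(\theta_0)$, $|S_0| = q_n$, and write $\|\theta - \theta_0\|_2^2 = \|\theta_{S_0} - \theta_{0,S_0}\|_2^2 + \|\theta_{S_0^c}\|_2^2$. Independence factors the prior mass into two pieces. On $S_0$, I would use the explicit Bessel form of the marginal in Proposition~\ref{propBes}: for each $j \in S_0$ and $\delta \asymp \sqrt{\log(n/q_n)}$, the mass of an interval of length $\delta$ around $\theta_{0,j}$ is at least $\delta$ times the density at $\theta_{0,j}$, which is of order $\exp(-\sqrt{2|\theta_{0,j}|})$ up to polynomial factors. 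Summing $-\sqrt{2|\theta_{0,j}|}$ over $j\in S_0$ and using Cauchy--Schwarz together with $\|\theta_0\|_2^2 \leq q_n\log^4 n$ gives $\sum_{j\in S_0}\sqrt{|\theta_{0,j}|} \lesssim q_n \log n$, hence the required $\exp(-C q_n \log(n/q_n))$ bound. On $S_0^c$, I would bound $\mathbb{E}\|\theta_{S_0^c}\|_2^2 = 2\,\mathbb{E}\psi_j^2\cdot(n-q_n) = 8a_n(a_n+1)(n-q_n)$; this is $\lesssim n^{-\beta}$ when $a_n = n^{-(1+\beta)}$ and $\lesssim 1$ when $a_n = 1/n$. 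Markov's inequality then gives probability $\geq 1/2$ of $\|\theta_{S_0^c}\|_2^2 \leq s_n^2$, which forces $s_n^2 \gtrsim 1$, explaining the extra condition $q_n \gtrsim \log n$ in the $a_n = 1/n$ regime.

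For (ii) and (iii), I would take the sieve
\[
\Theta_n = \bigl\{\theta \in \mathbb{R}^n : |\{j: |\theta_j| > \delta_n\}| \leq Aq_n,\ \|\theta\|_2 \leq R_n\bigr\},
\]
for carefully chosen $\delta_n$, $R_n$ and constant $A$. Covering $\Theta_n$ by first choosing the index set of large coordinates (contributing $\log\binom{n}{Aq_n} \lesssim q_n\log(n/q_n)$), then placing an $\ell_2$-net on the $Aq_n$-dimensional subspace of radius $R_n$, and swallowing the small-coordinate remainder $\sqrt{n}\,\delta_n$ into the net radius, gives $\log N(s_n, \Theta_n, \|\cdot\|_2) \lesssim q_n\log(n/q_n)$. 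For the complement, I would bound $\bbP(|\theta_j| > \delta_n)$ via the Bessel tail of Proposition~\ref{propBes}: with $a_n \lesssim 1/n$, this marginal probability is $O(a_n)$, so $\mathbb{E}|\{j:|\theta_j|>\delta_n\}| = O(na_n)$ is small; a Bernstein-type tail bound for the sum of independent indicators then yields $\bbP(\Theta_n^c)$ exponentially small in $q_n\log(n/q_n)$, with the norm constraint handled separately by the Gamma tails of the $\psi_j$'s.

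The main obstacle is the delicate calibration in the $S_0^c$ analysis: independent heavy-tailed Laplace variates with random Gamma scales are not subgaussian, so I cannot rely on standard concentration for the $\ell_2$ norm. The argument must exploit the extreme smallness of $a_n$ in a way that simultaneously gives small mass for $\theta_{S_0^c}$ \emph{and} adequate mass on $S_0$ to place relatively large signals, while also delivering the Bernstein-type bound needed for $\bbP(\Theta_n^c)$. This three-way trade-off dictates the permissible range of $a_n$ and is what drives the distinction between the two regimes in the theorem statement; quantifying it sharply via the Bessel marginal is the technical heart of the proof.
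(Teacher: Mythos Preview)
Your architecture matches the paper's: prior concentration via the Bessel marginal (Lemma~\ref{lem:prior_b}), dimension control through the approximate support $\mathrm{supp}_{\delta_n}(\theta)$, and a testing/entropy step on the resulting low-dimensional piece. For $a_n=n^{-(1+\beta)}$ your plan goes through essentially as written: Lemma~\ref{lem:prior_m} gives $\zeta_n:=\bbP(|\theta_1|>\delta_n)\lesssim (\log n)/n^{1+\beta}$, so Chernoff yields $\bbP\big(|\mathrm{supp}_{\delta_n}(\theta)|>Aq_n\big)\le e^{-cA\,q_n\log n}$, which dominates the prior-concentration constant $e^{-Cq_n\log n}$ once $A$ is large. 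The paper packages the remaining testing step as the ratio bound $\beta_{S,j,i}$ of Lemma~\ref{lem:betaSj} in the style of \cite{castilloneedles}, but the content is the same as your entropy argument.

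The genuine gap is in the $a_n=1/n$ regime. There $\zeta_n\asymp(\log n)/n$, and Chernoff only delivers $\bbP\big(|\mathrm{supp}_{\delta_n}(\theta)|>Aq_n\big)\le e^{-cAq_n}$, which is \emph{not} small compared with $e^{-Cq_n\log n}$; your GGV condition~(iii) therefore fails for every fixed $A$, and no choice of $\delta_n$ repairs this. The paper's workaround is Theorem~\ref{thm:compress}: by the i.i.d.\ representation~\eqref{eq:mingyuan} the posterior of $(\theta_{S_0},\theta_{S_0^c})$ factorizes, so the event $\{|\mathrm{supp}_{\delta_n}(\theta)\cap S_0^c|>Aq_n\}$ can be analyzed entirely in the sub-model on $S_0^c$, where the truth is $0$. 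The evidence lower bound there is merely $e^{-C\log n}$ (not $e^{-Cq_n\log n}$), against which $e^{-cAq_n}$ \emph{does} win once $q_n\gtrsim\log n$. This, and not your Markov bound on $\|\theta_{S_0^c}\|_2^2$, is the real source of the extra hypothesis: since $s_n^2=q_n\log(n/q_n)\to\infty$ whenever $q_n=o(n)$, your Markov step succeeds for all $q_n\ge 1$ and cannot explain the restriction. In short, for $a_n=1/n$ you must replace the naive sieve-complement bound by the two-step argument---first establish posterior compressibility on $S_0^c$ via factorization, then run the testing/entropy argument on $\{\mathrm{supp}_{\delta_n}(\theta)\in\mathcal{S}_n\}$.
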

A proof of Theorem \ref{thm:minimaxDL} can be found in Section \ref{sec:main_pf}. To best of our knowledge, Theorem \ref{thm:minimaxDL} is the first result obtaining posterior contraction rates for a continuous shrinkage prior in the normal means setting or the closely related high-dimensional regression problem. Theorem \ref{thm:minimaxDL} posits that when the parameter $a$ in the Dirichlet--Laplace prior is chosen, depending on the sample size, to be $n^{-(1+\beta)}$ for any $\beta > 0$ small, the resulting posterior contracts at the minimax rate \eqref{eq:minmax}, provided $\norm{\theta_0}_2^2 \leq q_n \log^4 n$. Using the Cauchy--Schwartz inequality, $\norm{\theta_0}_1^2 \leq q_n \norm{\theta_0}_2^2$ for $\theta_0 \in l_0[q_n; n]$ and the bound on $\norm{\theta_0}_2$ implies that $\norm{\theta_0}_1 \leq q_n (\log n)^2$. Hence, the condition in Theorem \ref{thm:minimaxDL} permits each non-zero signal to grow at a $(\log n)^2$ rate, which is a fairly mild assumption. Moreover, in a recent technical report, the authors showed that a large subclass of global-local priors \eqref{eq:lg} including the Bayesian lasso lead to a sub-optimal rate of posterior convergence; i.e., the expression in \eqref{eq:dl_cons} converges to $0$ whenever $\norm{\theta_0}_2^2/q_n \to \infty$. Therefore, Theorem \ref{thm:minimaxDL} indeed provides a substantial improvement over a large class of GL priors.

The choice $a_n = n^{-(1+\beta)}$ will be evident from the various auxiliary results in Section \ref{sec:aux}, specifically Lemma \ref{lem:prior_m} and Theorem \ref{thm:compress}.   The conclusion of Theorem  \ref{thm:minimaxDL} continues to hold when $a_n=1/n$ under an additional mild assumption on the sparsity $q_n$.  In Table \ref{tab:1} of Section \ref{sec:sims},  detailed empirical results are provided with $a_n = 1/n$ as a default choice. 

The lower bound result alluded to in the previous paragraph precludes GL priors with polynomial tails, such as the horseshoe.  We hope to address the polynomial tails case elsewhere, though based on strong empirical performance, we conjecture that the horseshoe leads to the optimal posterior contraction in a much broader domain compared to the Bayesian lasso and other common shrinkage priors. 

\subsection{\bf{Auxiliary results}}\label{sec:aux}
In this section, we state a number of properties of the DL prior which provide a better understanding of the joint prior structure and also crucially help us in proving Theorem \ref{thm:minimaxDL}.

We first provide useful bounds on the joint density of the DL prior in Lemma \ref{lem:prior_b} below; a proof can be found in the Appendix. 
\begin{lemma}\label{lem:prior_b}
Consider the $DL_{a}$ prior on $\mb R^n$ for $a$ small. Let $S \subset \{1, \ldots, n\}$ and $\eta \in \mb R^{|S|}$.

If $\min_{1\leq j \leq |S|} |\eta_j| > \delta$ for $\delta$ small, then
\begin{align}
& \log \Pi_S(\eta) \leq  C |S| \log(1/\delta), \label{eq:lip1}
\end{align}
where $C > 0$ is an absolute constant. 

If $\|\eta\|_2 \leq m$ for $m$ large, then 
\begin{align}
& -\log \Pi_S(\eta) \leq C \{ |S| \log(1/a) + |S|^{3/4} m^{1/2}\}, \label{eq:lip2}
\end{align}
where $C > 0$ is an absolute constant. 
\end{lemma}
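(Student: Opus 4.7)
The plan is to reduce both bounds to pointwise estimates on $\log \Pi(\eta_j)$ using the explicit marginal density from Proposition \ref{propBes}, namely $\Pi(\theta_j) = C_a |\theta_j|^{(a-1)/2} K_{1-a}(\sqrt{2|\theta_j|})$ with $C_a = 2^{-(a+1)/2}/\Gamma(a)$, and then to sum over $j \in S$. The key inputs are the standard asymptotics for the modified Bessel function of the second kind: $K_\nu(x) \sim \Gamma(\nu) 2^{\nu-1} x^{-\nu}$ as $x \downarrow 0$ for $\nu > 0$, and $K_\nu(x) \sim \sqrt{\pi/(2x)}\, e^{-x}$ as $x \to \infty$. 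Since the $\mathrm{DL}_a$ prior has independent coordinates by representation \eqref{eq:mingyuan}, $\Pi_S$ factorizes as a product of marginals, so $\log \Pi_S(\eta) = \sum_{j \in S} \log \Pi(\eta_j)$.

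For \eqref{eq:lip1}, I first observe that $\Pi(\theta_j)$ is strictly decreasing in $|\theta_j|$: the factor $|\theta_j|^{(a-1)/2}$ is decreasing since $a < 1$, and $K_{1-a}$ is itself decreasing on $(0,\infty)$. Thus $\sup_{|\eta_j| > \delta} \Pi(\eta_j) \leq \Pi(\delta)$. Applying the small-argument asymptotic together with $\Gamma(a) \sim 1/a$ and $\Gamma(1-a) \to 1$ yields, for $\delta$ small enough, $\Pi(\delta) \leq C\, a\, \delta^{a-1}$, so $\log \Pi(\eta_j) \leq \log C + \log a + (1-a)\log(1/\delta) \leq C' \log(1/\delta)$ since $\log a < 0$ for small $a$. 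Summing over $j \in S$ gives $\log \Pi_S(\eta) \leq C|S|\log(1/\delta)$.

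For \eqref{eq:lip2} I instead need a pointwise \emph{lower} bound on $\Pi(\eta_j)$, obtained by partitioning on the size of $|\eta_j|$. (i) If $|\eta_j| \leq a$, the small-argument asymptotic already forces $\Pi(\eta_j) \geq 1$, hence $-\log \Pi(\eta_j) \leq 0$. (ii) If $a < |\eta_j| \leq 1$, the same asymptotic yields $\Pi(\eta_j) \geq c a |\eta_j|^{a-1}$, from which $-\log \Pi(\eta_j) \leq C\log(1/a) + (1-a)\log(1/|\eta_j|) \leq C'\log(1/a)$ because $\log(1/|\eta_j|) \leq \log(1/a)$. (iii) If $|\eta_j| > 1$, the large-argument asymptotic together with the elementary bound $\log|\eta_j| \leq \sqrt{|\eta_j|}$ gives $-\log \Pi(\eta_j) \leq C\log(1/a) + C'\sqrt{|\eta_j|}$. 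In all three cases, $-\log \Pi(\eta_j) \leq C[\log(1/a) + \sqrt{|\eta_j|}]$. Summing and invoking two successive applications of Cauchy--Schwarz, $\sum_{j \in S}\sqrt{|\eta_j|} \leq |S|^{1/2}\big(\sum_{j \in S} |\eta_j|\big)^{1/2} \leq |S|^{3/4} \|\eta\|_2^{1/2} \leq |S|^{3/4} m^{1/2}$, delivers \eqref{eq:lip2}.

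The main obstacle I anticipate is producing clean uniform-in-$a$ bounds across the three regimes without constants that degenerate as $a \downarrow 0$; in particular, careful bookkeeping of the factor $1/\Gamma(a)$ in the normalizing constant is required to ensure that only a single $\log(1/a)$ appears per coordinate. A secondary subtlety worth highlighting is that the divergence of $\Pi$ at the origin makes regime (i) contribute \emph{nothing} to the bound, which is precisely what removes a spurious $\sum_j \log(1/|\eta_j|)$ contribution that would otherwise spoil the estimate and preclude the clean $|S|^{3/4} m^{1/2}$ form.
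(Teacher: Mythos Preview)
Your proposal is correct and follows essentially the same approach as the paper: both arguments factor $\log\Pi_S(\eta)$ via the product form from \eqref{eq:mingyuan}, use monotonicity together with the small-argument Bessel asymptotic $K_\nu(z)\sim \tfrac12\Gamma(\nu)(z/2)^{-\nu}$ and $\Gamma(a)\sim 1/a$ for \eqref{eq:lip1}, and use the large-argument asymptotic together with the two-step Cauchy--Schwarz bound $\sum_j\sqrt{|\eta_j|}\le |S|^{3/4}\|\eta\|_2^{1/2}$ for \eqref{eq:lip2}. Your three-regime split for \eqref{eq:lip2} is in fact more explicit than the paper's proof, which only writes out the large-$|\eta_j|$ case; one small caveat is that in regime (i) you only get $\Pi(\eta_j)\ge c$ for an absolute constant $c$ (since $\Pi(a)\sim \tfrac12 a^a\to\tfrac12$), not $\Pi(\eta_j)\ge 1$, but the resulting $-\log\Pi(\eta_j)\le \log(1/c)$ is absorbed into $C\log(1/a)$ exactly as you intend.
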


It is evident from Figure \ref{fig:plot_marg} that the univariate marginal density $\Pi$ has an infinite spike near zero. We quantify the probability assigned to a small $\delta$-neighborhood of the origin in Lemma \ref{lem:prior_m} below. 
\begin{lemma}\label{lem:prior_m}
Assume $\theta_1 \in \mb R$ has a probability density $\Pi$ as in \eqref{eq:dens_marg}. Then, for $\delta > 0$ small,
$$
\bbP(|\theta_1| > \delta) \leq C \log(1/\delta)/\Gamma(a),
$$
where $C > 0$ is an absolute constant. 
\end{lemma}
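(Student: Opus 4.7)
The plan is to exploit the alternative hierarchical representation \eqref{eq:mingyuan}, namely $\theta_1 \mid \psi_1 \sim \mathrm{DE}(\psi_1)$ with $\psi_1 \sim \mathrm{Ga}(a, 1/2)$, rather than working directly with the Bessel-function form \eqref{eq:dens_marg}. Since $|\theta_1| \mid \psi_1$ is exponential with mean $\psi_1$, one has $\bbP(|\theta_1| > \delta \mid \psi_1) = e^{-\delta/\psi_1}$, and averaging against the Gamma density for $\psi_1$ gives
\begin{align*}
\bbP(|\theta_1| > \delta) \;=\; \frac{(1/2)^a}{\Gamma(a)} \int_0^\infty \psi^{a-1}\, e^{-\delta/\psi - \psi/2}\, d\psi.
\end{align*}
Since $(1/2)^a \leq 1$ for $a \in (0,1)$, it suffices to show that the integral above is bounded by $C \log(1/\delta)$ with $C$ an absolute constant, uniformly for $a$ in a small neighborhood of $0$.

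To bound the integral, I would split the range of integration into the three regions $(0,\delta)$, $(\delta,1)$, and $(1,\infty)$ and control each piece separately. On $(0,\delta)$, I would bound $e^{-\psi/2} \leq 1$ and apply the substitution $u = \delta/\psi$, which turns the integral into $\delta^a \int_1^\infty u^{-a-1} e^{-u}\, du \leq \delta^a/e \leq 1/e$ since $\delta \leq 1$. On $(1,\infty)$, I would use $e^{-\delta/\psi} \leq 1$ and $\psi^{a-1} \leq 1$ (valid for $a \leq 1$ and $\psi \geq 1$) to bound the contribution by $\int_1^\infty e^{-\psi/2}\, d\psi = 2 e^{-1/2}$, another absolute constant.

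The one region where the $\log(1/\delta)$ factor actually appears, and which constitutes the main (mild) obstacle, is the middle piece $(\delta,1)$. There, both exponential factors are bounded by $1$, yielding
\begin{align*}
\int_\delta^1 \psi^{a-1}\, e^{-\delta/\psi - \psi/2}\, d\psi \;\leq\; \int_\delta^1 \psi^{a-1}\, d\psi \;=\; \frac{1-\delta^a}{a}.
\end{align*}
This quantity is not obviously uniformly bounded as $a \to 0$ because both numerator and denominator vanish. The key step is to apply the elementary inequality $1 - e^{-x} \leq x$ with $x = a \log(1/\delta) \geq 0$, giving $1 - \delta^a \leq a \log(1/\delta)$ and hence $(1-\delta^a)/a \leq \log(1/\delta)$. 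This is precisely the bound that ensures uniformity in small $a$.

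Combining the three pieces, the full integral is at most $\log(1/\delta) + 1/e + 2 e^{-1/2}$, which is $\leq C \log(1/\delta)$ for $\delta$ small enough (so that $\log(1/\delta) \geq 1$). Multiplying by $(1/2)^a/\Gamma(a) \leq 1/\Gamma(a)$ yields the claimed bound $\bbP(|\theta_1| > \delta) \leq C \log(1/\delta)/\Gamma(a)$, completing the proof.
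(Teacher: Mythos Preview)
Your proof is correct. Both you and the paper begin from the same starting point---the hierarchical representation \eqref{eq:mingyuan}, giving $\bbP(|\theta_1|>\delta) = \frac{(1/2)^a}{\Gamma(a)}\int_0^\infty \psi^{a-1}e^{-\delta/\psi-\psi/2}\,d\psi$---and then split the integral, but the executions diverge. The paper splits at $4\delta$, bounds $\psi^{a-1}\le \psi^{-1}$ on the relevant range to reduce the tail piece to the exponential integral $\int_{2\delta}^\infty e^{-t}/t\,dt$, and then invokes an inequality of Alzer for the incomplete gamma function to obtain the $\log(1/\delta)$ bound. Your three-way split at $\delta$ and $1$ keeps the $a$-dependence explicit: the middle piece evaluates exactly to $(1-\delta^a)/a$, and the elementary inequality $1-e^{-x}\le x$ with $x=a\log(1/\delta)$ immediately yields $\log(1/\delta)$, uniformly in $a\in(0,1)$. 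Your route is more self-contained (no external citation needed) and makes the uniformity in small $a$ completely transparent; the paper's route ties the bound to the classical exponential integral $E_1$, which may be conceptually appealing but requires the Alzer reference.
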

A proof of Lemma \ref{lem:prior_m} can be found in the Appendix. 

In case of point mass mixture priors \eqref{eq:point_mass}, the induced prior on the model size $|\mbox{supp}(\theta)|$ follows a $\mbox{Binomial}(n, \pi)$ prior (given $\pi$), facilitating study of the multiplicity phenomenon \cite{scott2010bayes}.  However, $\bbP(\theta = 0) = 1$ for any continuous shrinkage prior, which compounds the difficulty in studying the degree of shrinkage for these classes of priors. Letting $\mbox{supp}_{\delta}(\theta) = \{ j : |\theta_j| > \delta\}$ to be the entries in $\theta$ larger than $\delta$ in magnitude, we propose $|\mbox{supp}_{\delta}(\theta)|$ as an approximate measure of model size for continuous shrinkage priors. We show in Theorem \ref{thm:compress} below that for an appropriate choice of $\delta$, $|\mbox{supp}_{\delta}(\theta)|$ doesn't exceed a constant multiple of the true sparsity level $q_n$ with {\em posterior probability} tending to one, a property which we refer to as {\em posterior compressibility}. 
\begin{theorem}\label{thm:compress}
Consider model \eqref{eq:norm_means} with $\theta \sim \mathrm{DL}_{a_n}$ as in \eqref{eq:laplace_dir_tau}, where $a_n = n^{-(1+\beta)}$ for some $\beta > 0$ small. Assume $\theta_0 \in l_0[q_n; n]$ with $q_n = o(n)$. Let $\delta_n = q_n / n$. Then,
\begin{align}\label{eq:suppdelta}
\lim_{n \to \infty} E_{\theta_0} \bbP(\abs{\mathrm{supp}_{\delta_n}(\theta)} > A q_n \mid y^{(n)})  = 0,  
\end{align}
for some constant $A > 0$. 
If $a_n = 1/n$ instead, then \eqref{eq:suppdelta} holds when $q_n \gtrsim \log n$. 
\end{theorem}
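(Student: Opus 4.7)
The plan is to apply the standard ``prior mass plus evidence lower bound'' recipe to the compressibility event $B_n = \{\theta : \abs{\mbox{supp}_{\delta_n}(\theta)} > A q_n\}$. Writing the posterior probability as $N/D$ with
$$N = \int_{B_n} \frac{f_\theta(y)}{f_{\theta_0}(y)}\,\Pi(d\theta), \qquad D = \int \frac{f_\theta(y)}{f_{\theta_0}(y)}\,\Pi(d\theta),$$
the likelihood-ratio identity gives $E_{\theta_0}[N] = \Pi(B_n)$ by Fubini. Hence it suffices to produce a lower bound $D \geq D_0$ holding with $P_{\theta_0}$-probability $1-o(1)$ and to verify $\Pi(B_n)/D_0 \to 0$; the conclusion then follows from $E_{\theta_0}\bbP(B_n \mid y) \leq \Pi(B_n)/D_0 + P_{\theta_0}(D < D_0)$.

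The upper bound on $\Pi(B_n)$ exploits the key structural observation in \eqref{eq:mingyuan}: under the $\mbox{DL}_{a_n}$ prior (with the specific $\mbox{gamma}(na, 1/2)$ hyperprior on $\tau$), the coordinates $\theta_1, \ldots, \theta_n$ are i.i.d.\ with marginal density $\Pi$ from Proposition \ref{propBes}. Consequently $\abs{\mbox{supp}_{\delta_n}(\theta)}$ is $\mbox{Bin}(n, p_n)$-distributed, where Lemma \ref{lem:prior_m} combined with $1/\Gamma(a_n) \sim a_n$ gives $p_n \leq C a_n \log(n/q_n)$. A standard binomial tail bound then yields
$$\Pi(B_n) \leq \binom{n}{Aq_n}\, p_n^{Aq_n} \leq \biggl(\frac{C n^{-\beta} \log(n/q_n)}{A q_n}\biggr)^{Aq_n},$$
which is of order $\exp(-A \beta q_n \log n)$ modulo lower-order terms.

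For the lower bound on $D$, I would invoke the standard evidence lemma: since both $\mathrm{KL}(P_{\theta_0}, P_\theta)$ and the variance of the log-likelihood-ratio equal $\|\theta-\theta_0\|_2^2$ (up to a factor of $1/2$) in the normal means model, with $P_{\theta_0}$-probability $1 - o(1)$,
$$D \geq e^{-2 s_n^2}\,\Pi\bigl(\|\theta-\theta_0\|_2 \leq s_n\bigr), \qquad s_n^2 = q_n \log(n/q_n).$$
By coordinate independence, I would factor the prior ball across $S_0 = \mbox{supp}(\theta_0)$ and its complement. The null coordinates are controlled by demanding $|\theta_j| \leq s_n/\sqrt{2n}$ on $S_0^c$; Lemma \ref{lem:prior_m} together with independence then gives prior mass $(1 - C a_n \log n)^{n - q_n} = 1 - o(1)$. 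The signal coordinates are handled by \eqref{eq:lip2} of Lemma \ref{lem:prior_b}, which lower-bounds the density on the radius-$s_n/\sqrt{2}$ ball around $(\theta_0)_{S_0}$ by $\exp\{-Cq_n\log(1/a_n) - Cq_n^{3/4} m^{1/2}\}$ with $m = \|\theta_0\|_2 + s_n$; multiplying by the volume of this ball (using Stirling for $V_{q_n}$) gives prior mass at least $\exp\{-O(q_n \log n) - O(q_n^{3/4} m^{1/2})\}$.

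Combining the pieces,
$$E_{\theta_0}\bbP(B_n \mid y^{(n)}) \lesssim \exp\bigl\{-A\beta q_n\log n + Cq_n\log n + 2 s_n^2 + Cq_n^{3/4} m^{1/2}\bigr\} + o(1),$$
which tends to $0$ for $A$ chosen sufficiently large in terms of $\beta$, provided the $q_n^{3/4}m^{1/2}$ loss is of lower order -- a mild implicit control on $\|\theta_0\|_2$ in the spirit of the $\|\theta_0\|_2^2 \leq q_n(\log n)^4$ hypothesis of Theorem \ref{thm:minimaxDL}. For $a_n = 1/n$ one has $n p_n = O(\log n)$ rather than $o(1)$, so the crude union-bound binomial estimate no longer suffices and must be sharpened to a Chernoff inequality; this succeeds only once $A q_n$ exceeds the binomial mean $n p_n \asymp \log n$ by a constant factor, forcing the extra condition $q_n \gtrsim \log n$. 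The main technical hurdle, in either case, is the signal-block lower bound via \eqref{eq:lip2}: the $m^{1/2}$ dependence on $\|\theta_0\|_2$ is precisely what limits how large the nonzero signals may be, and sharpening or circumventing this estimate is the most delicate step.
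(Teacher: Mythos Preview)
Your overall framework---writing the posterior probability as $N/D$, bounding $E_{\theta_0}[N]$ by the prior mass $\Pi(B_n)$, and invoking the evidence lower bound for $D$---matches the paper's. The binomial/Chernoff analysis of $\Pi(B_n)$ is also essentially the same. However, there is a substantive gap: your argument requires a bound on $\|\theta_0\|_2$ (you say so yourself, invoking ``a mild implicit control on $\|\theta_0\|_2$ in the spirit of'' Theorem~\ref{thm:minimaxDL}), but Theorem~\ref{thm:compress} as stated carries \emph{no} such hypothesis. So as written you prove a weaker statement than the one claimed.

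The paper avoids this entirely by a trick you are missing. Rather than working with the full event $B_n$, it first observes that $|\mbox{supp}_{\delta_n}(\theta)| \leq |\mbox{supp}_{\delta_n}(\theta) \cap S_0^c| + q_n$, so it suffices to control $\mathcal{B}_n = \{|\mbox{supp}_{\delta_n}(\theta) \cap S_0^c| > Aq_n\}$. Because the coordinates of $\theta$ are i.i.d.\ under the prior \eqref{eq:mingyuan} and the likelihood factorizes, the posterior of $\theta_{S_0^c}$ given $y^{(n)}$ depends only on $y_{S_0^c}$, and on $S_0^c$ the true mean is \emph{zero}. The evidence lower bound therefore only needs $\bbP(\|\theta_{S_0^c}\|_2 \leq r_n)$ with $r_n^2 = q_n$, which is handled purely by Lemma~\ref{lem:prior_m} on the null coordinates---no signal block, no call to \eqref{eq:lip2}, and hence no dependence on $\|\theta_0\|_2$ whatsoever. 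This is precisely the step you flagged as ``the most delicate''; the paper's restriction to $S_0^c$ makes it disappear. Your route would become a valid proof of Theorem~\ref{thm:compress} only under the added hypothesis $\|\theta_0\|_2^2 \lesssim q_n(\log n)^4$; the paper's route delivers the theorem as stated.
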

The choice of $\delta_n$ in Theorem \ref{thm:compress} guarantees that the entries in $\theta$ smaller than $\delta_n$ in magnitude produce a negligible contribution to $\norm{\theta}$.  Observe that the prior distribution of $|\mbox{supp}_{\delta_n}(\theta)|$ is $\mbox{Binomial}(n, \zeta_n)$, where $\zeta_n = \bbP(|\theta_1| > \delta_n)$. When $a_n = n^{-(1 + \beta)}$, $\zeta_n$ can be bounded above by $\log n/n^{1+\beta}$ in view of Lemma \ref{lem:prior_m} and the fact that $\Gamma(x) \geq 1/x$ for $x$ small. Therefore, the prior expectation $\bbE \abs{\mathrm{supp}_{\delta_n}(\theta)} \leq \log n/n^{\beta}$. This actually implies an exponential tail bound for $\bbP(\abs{\mathrm{supp}_{\delta_n}(\theta)} > A q_n)$ by Chernoff's method, which is instrumental in deriving Theorem \ref{thm:compress}. A proof of Theorem \ref{thm:compress} along these lines can be found in the Appendix. 

The posterior compressibility property in Theorem \ref{thm:compress} ensures that the dimensionality of the posterior distribution of $\theta$ (in an approximate sense) doesn't substantially overshoot the true dimensionality of $\theta_0$, which together with the bounds on the joint prior density near zero and infinity in Lemma \ref{lem:prior_b} delivers the minimax rate in Theorem \ref{thm:minimaxDL}.


\section{Simulation Study}\label{sec:sims}

To illustrate the finite-sample performance of the proposed DL prior (with $a= 1/n$), we show the results from a replicated simulation study with various dimensionality $n$ and sparsity level $q_n$.  In each setting, we have $100$ replicates of a $n$-dimensional vector $y$ sampled from a $\mbox{N}_{n}(\theta_0, \mathrm{I}_n)$ distribution with $\theta_0$ having $q_n$ non-zero entries which are all set to be a constant $A > 0$. We chose two values of $n$, namely $n = 100, 200$. For each $n$, we let $q_n  = 5, 10, 20 \%$ of $n$ and choose $A = 7, 8$. This results in $12$ simulation settings in total. The simulations were designed to mimic the setting in Section 3 where $\theta_0$ is sparse with a few moderate-sized coefficients. 

\begin{table}[htbp]
\caption{Squared error comparison  over 100 replicates. Average squared error across replicates reported for BL (Bayesian lasso), DL (Dirichlet--Laplace), LS (Lasso), EBMed (Empirical Bayes median), PM (Point mass prior) and HS (horseshoe). } \small
\begin{flushleft} 
\begin{tabular}{ccccccccccccc} \toprule
 {\bf n} & \multicolumn{6}{c}{{\bf 100}} & \multicolumn{6}{c}{{\bf 200}}  \\
 \cmidrule(lr){2-7} \cmidrule(lr){8-13} \\
 {\bf $\mathbf{\frac{q_n}{n} \%}$} & \multicolumn{2}{c}{{\bf 5}} & \multicolumn{2}{c}{{\bf 10}} & 
  \multicolumn{2}{c}{{\bf 20}} & \multicolumn{2}{c}{{\bf 5}} & \multicolumn{2}{c}{{\bf 10}} & 
  \multicolumn{2}{c}{{\bf 20}}\\
  \cmidrule(lr){2-3} \cmidrule(lr){4-5}  \cmidrule(lr){6-7} \cmidrule(lr){8-9} \cmidrule(lr){10-11} \cmidrule(lr){12-13}  \\
  {\bf A} & {\bf 7} &{\bf 8} & {\bf 7} &{\bf 8}  & {\bf 7} & {\bf 8} & {\bf 7} & {\bf 8} & {\bf 7} & {\bf 8} &{\bf 7} & {\bf 8} \\ \hline 
  BL & 33.05 & 33.63 & 49.85 & 50.04  & 68.35 & 68.54 & 64.78 & 69.34 & 99.50 & 103.15 &133.17 & 136.83 \\ 
  $DL_{1/n}$ & 8.20 & 7.19 & 17.29 & 15.35  & 32.00 & 29.40 & 16.07 & 14.28 & 33.00 & 30.80 & 65.53 & 59.61 \\  \hline
  LS & 21.25 & 19.09 & 38.68 &37.25  & 68.97 & 69.05 & 41.82 & 41.18& 75.55 & 75.12 &137.21 & 136.25 \\ 
  EBMed & 13.64 & 12.47 & 29.73 &27.96  & 60.52 & 60.22 & 26.10 & 25.52 & 57.19 & 56.05 &119.41 & 119.35 \\ 
  PM & 12.15 & 10.98 & 25.99 &24.59  & 51.36 & 50.98 &22.99 &22.26 & 49.42 &48.42 &101.54 & 101.62\\ 
  HS & 8.30 & 7.93  & 18.39 &16.27 & 37.25 & 35.18 & 15.80 & 15.09 &35.61 & 33.58 &72.15 & 70.23 \\  \hline
 
\end{tabular}
\end{flushleft}
\label{tab:1}
\end{table}

The squared error loss corresponding to the posterior median averaged across simulation replicates is provided in Table \ref{tab:1}. To offer further grounds for comparison, we have also tabulated the results for Lasso (LS), Empirical Bayes median (EBMed) as in \cite{johnstone2004needles}\footnote{The EBMed procedure was implemented using the package \cite{johnstone2005ebayesthresh}. }, posterior median with a point mass prior (PM) as in \cite{castilloneedles} and the posterior median corresponding to the horseshoe prior \cite{carvalho2010horseshoe}. For the fully Bayesian analysis using point mass mixture priors, we use a complexity prior on the subset-size, $\pi_n(s) \propto \exp \{ -\kappa s \log (2n/s) \}$ with $\kappa = 0.1$ and independent standard Laplace priors for the non-zero entries as in \cite{castilloneedles}.\footnote{Given a draw for $s$, a subset $S$ of size $s$ is drawn uniformly. Set $\theta_j = 0$ for all $j \notin S$ and draw $\theta_j, j \in S$ i.i.d. from standard Laplace. The beta-bernoulli priors in \eqref{eq:point_mass} induce a similar prior on the subset size.}

Even in this succinct summary of the results, a wide difference between the Bayesian Lasso and the proposed $\mbox{DL}_{1/n}$ is observed in Table \ref{tab:1}, vindicating our theoretical results. The horseshoe performs similarly as the $\mbox{DL}_{1/n}$. The superior performance of the $\mbox{DL}_{1/n}$ prior can be attributed to its strong concentration around the origin. However, in cases where there are several relatively small signals, the $\mbox{DL}_{1/n}$ prior can shrink all of them towards zero. In such settings, depending on the practitioner's utility function, the singularity at zero can be softened using a $\mbox{DL}_a$ prior for a larger value of $a$. 
In the next set of simulations, we report results for $a = 1/2$, whence computational gains arise as the distribution of $T_j$ in (iv) turns out to be inverse-Gaussian (iG), for which exact samplers are available.  In practice, one could as well use a discrete uniform prior on $a$ as in Section \ref{sec:real}.

\begin{table}[htbp]
\caption{Squared error comparison over 100 replicates. Average squared error for the posterior median reported for BL (Bayesian Lasso), HS (horseshoe) and DL (Dirichlet--Laplace) with $a = 1/n$ and $a = 1/2$ respectively.}
\begin{center}
\begin{tabular}{ccccccc} \toprule
 {\bf n} & \multicolumn{6}{c}{{\bf 1000}}\\
 \cmidrule(lr){1-7} \\
   {\bf A} & {\bf 2} &{\bf3} & {\bf4} &{\bf 5}  & {\bf 6} & {\bf 7}  \\ \hline 
  BL & 299.30 & 385.68 & 424.09 & 450.20  & 474.28 & 493.03\\ 
  HS &306.94 & 353.79  & 270.90 &205.43 & 182.99& 168.83  \\ 
 $ \mbox{DL}_{1/n}$ & 368.45 & 679.17 & 671.34 & 374.01  & 213.66 & 160.14  \\ 
 $ \mbox{DL}_{1/2}$ & 267.83 & 315.70 & 266.80 & 213.23  & 192.98 & 177.20 \\ \hline 
\end{tabular}
\end{center}
\label{tab:2}
\end{table}
For illustration purposes, we choose a simulation setting akin to an example in \cite{carvalho2010horseshoe}, where one has a single observation $y$ from a $n = 1000$ dimensional $\mbox{N}_{n}(\theta_0, \mathrm{I}_{n})$ distribution, with $\theta_0[1:10] = 10, \theta_0[11:100] = A$, and $\theta_0[101:1000] = 0$. We then vary $A$ from $2$ to $7$ and summarize the squared error averaged across $100$ replicates in Table \ref{tab:2}. We only compare the Bayesian shrinkage priors here; the squared error for the posterior median is tabulated. Table \ref{tab:2} clearly illustrates the need for prior elicitation in high dimensions according to the need, shrinking the noise vs. signal detection. 

For visual illustration and comparison, we finally present the results from a single replicate in the first simulation setting with $n = 200$, $q_n = 10$ and $A = 7$ in Figure \ref{fig:sima} \& \ref{fig:simb}. The blue circles indicate the entries of $y$, while the red circles correspond to the posterior median of $\theta$. The shaded region corresponds to a $95 \%$ point wise credible interval for $\theta$. 

\begin{center}
\begin{figure}[h!]
\includegraphics[width=7.5in,angle=90]{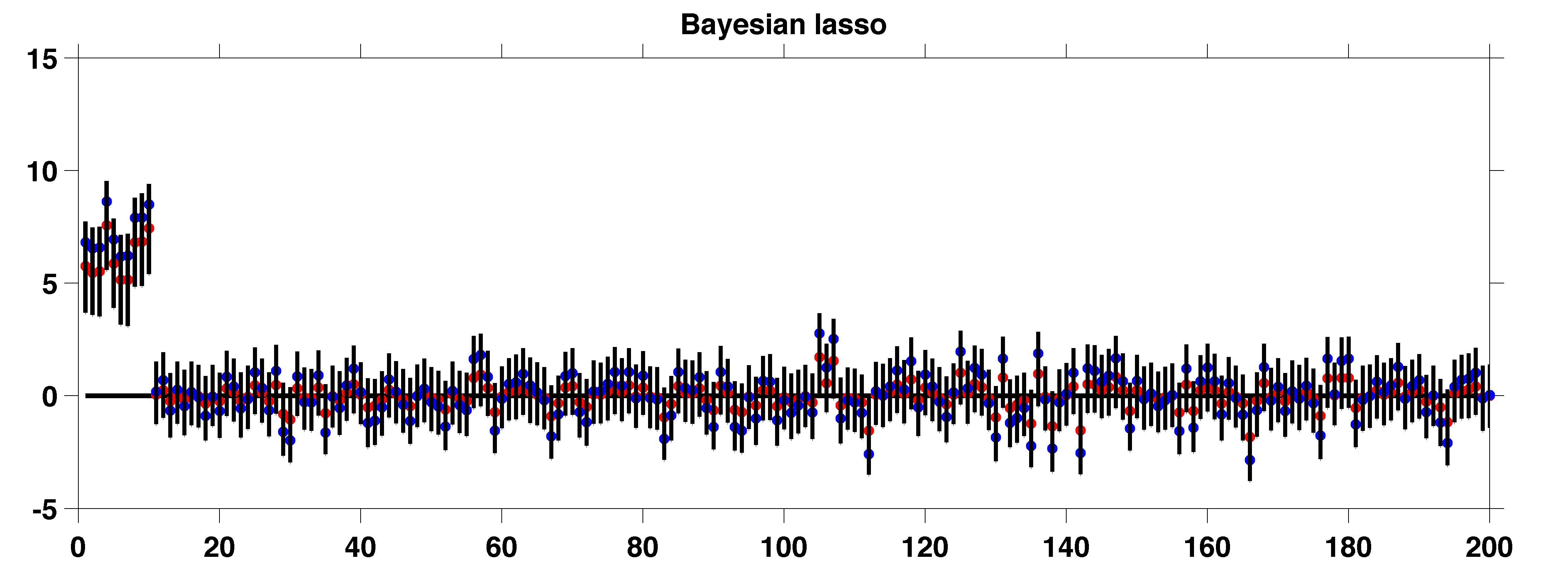} \hspace{0.6in}
\includegraphics[width=7.5in,angle=90]{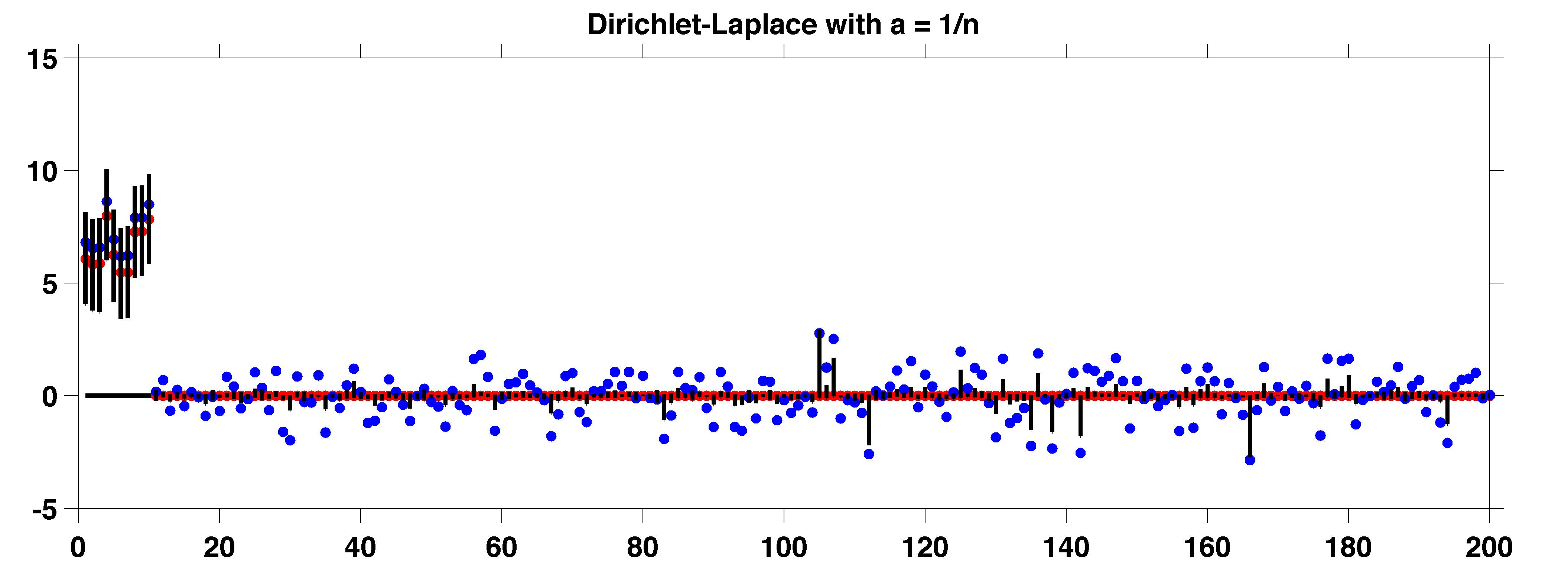}
\caption{Simulation results from a single replicate with $n=200, q_n = 10, A=7$. Blue circles = entries of $y$, red circles = posterior median of $\theta$, shaded region: $95 \%$ point wise credible interval for $\theta$. Left panel: Bayesian lasso, right panel: $\mbox{DL}_{1/n}$ prior }
\label{fig:sima}
\end{figure}
\end{center}

\begin{center}
\begin{figure}[htbp]
\includegraphics[width=7.5in,angle=90]{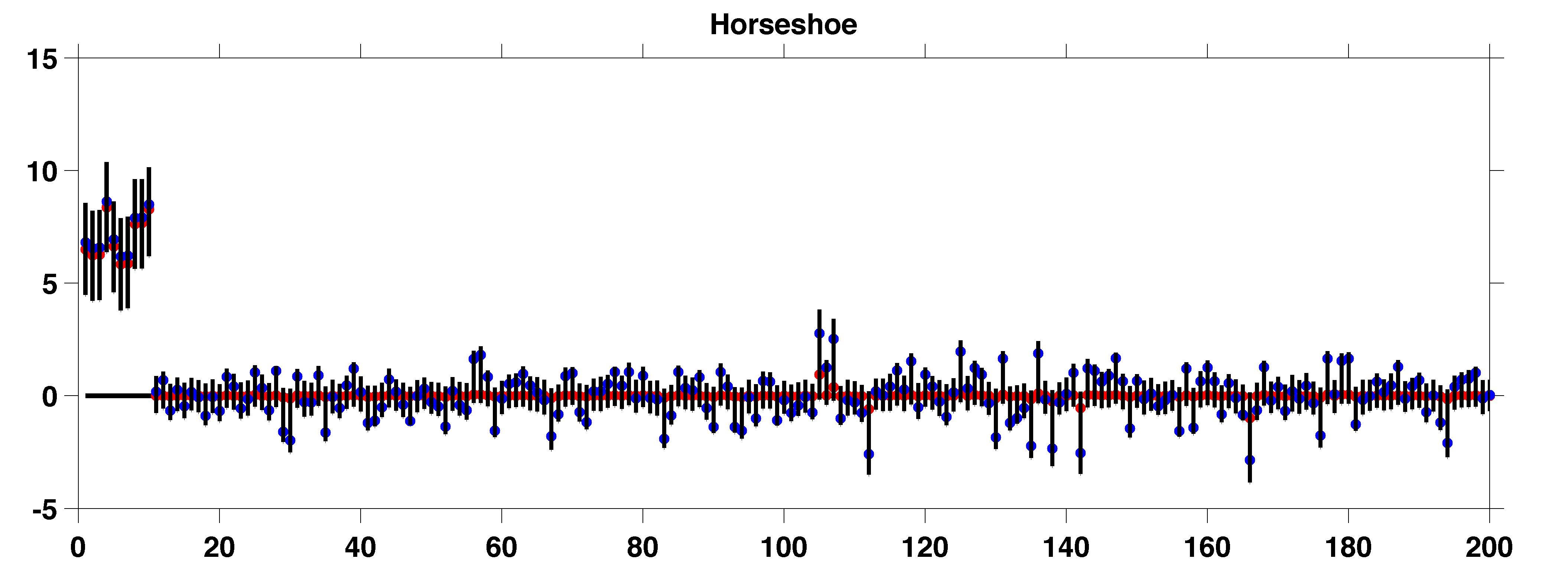}\hspace{0.6in}
\includegraphics[width=7.5in,angle=90]{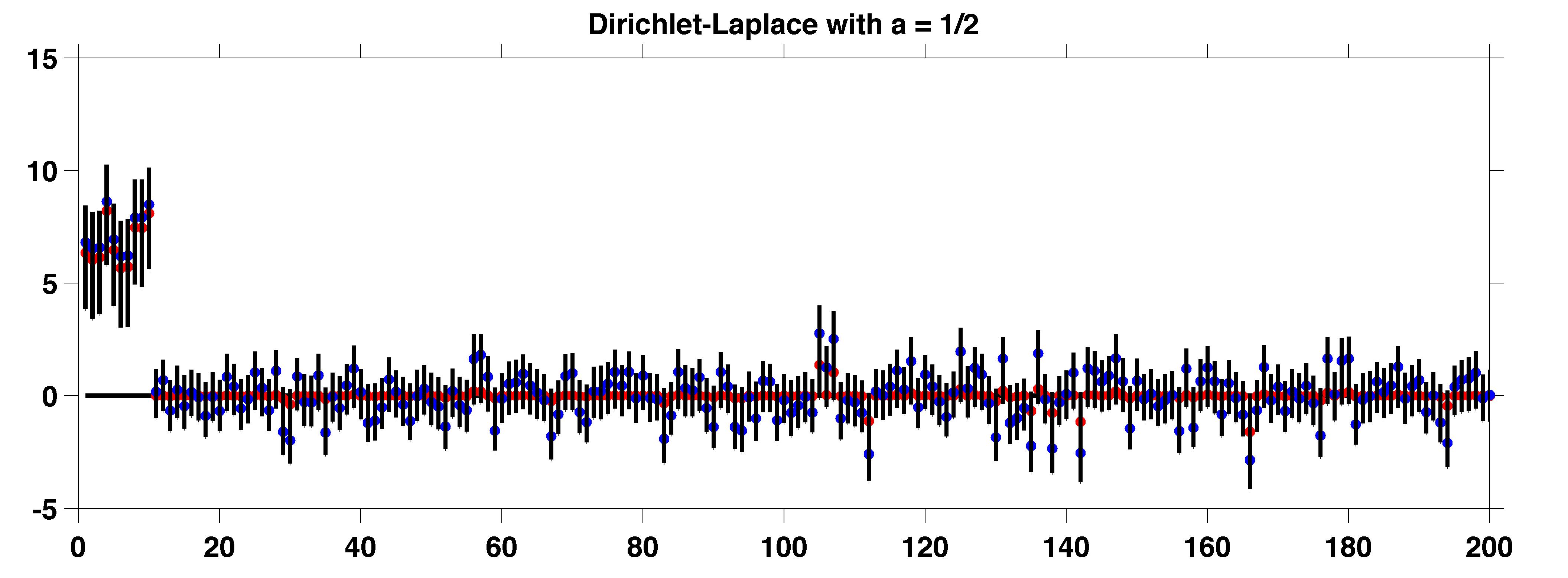} 
\caption{Simulation results from a single replicate with $n=200, q_n = 10, A=7$. Blue circles = entries of $y$, red circles = posterior median of $\theta$, shaded region: $95 \%$ point wise credible interval for $\theta$. Left panel: Horseshoe, right panel: $\mbox{DL}_{1/2}$ prior }
\label{fig:simb}
\end{figure}
\end{center}

\section{Prostate data application}\label{sec:real}

We consider a popular dataset \cite{efron2008microarrays,efron2010large} from a microarray experiment consisting of expression levels for $6033$ genes for $50$ normal control subjects and $52$ patients diagnosed with prostate cancer. The data takes the form of a $6033 \times 102$ matrix with the $(i, j)$th entry corresponding to the expression level for gene $i$ on patient $j$; the first $50$ columns correspond to the normal control subjects with the remaining $52$ for the cancer patients. 
The goal of the study is to discover genes whose expression levels {\em differ}  
between the prostate cancer patients (treatment) and normal subjects (control).  A  two sample $t$-test with $100$ degrees of freedom was implemented for each gene and the resulting t-statistic $t_i$ was converted to a $z$-statistic $z_i = \Phi^{-1} ( T_{100}(t_i))$. 
Under the null hypothesis $H_{0i}$ of no difference in expression levels between the treatment and control group for the $i$th gene, the null distribution of $z_i$ is $\mbox{N}(0, 1)$. Figure \ref{fig:hist} shows a histogram of the $z$-values, comparing it to 
a $\mbox{N}(0,1)$ density with a multiplier chosen to make the curve integrate to the same area as the histogram. The shape of the histogram suggests the presence of certain interesting genes \cite{efron2008microarrays}. 

The classical Bonferroni correction for multiple testing flags only $6$ genes as significant, while the two-group empirical Bayes method of \cite{johnstone2004needles} found $139$ significant genes, being much less conservative. The local Bayes false discovery rate (fdr) \citep{benjamini1995controlling} control method identified $54$ genes as non-null. For detailed analysis of this dataset using existing methods, refer to \cite{efron2008microarrays,efron2010large}.  

\begin{figure}[htbp]
\includegraphics[width=6.5in]{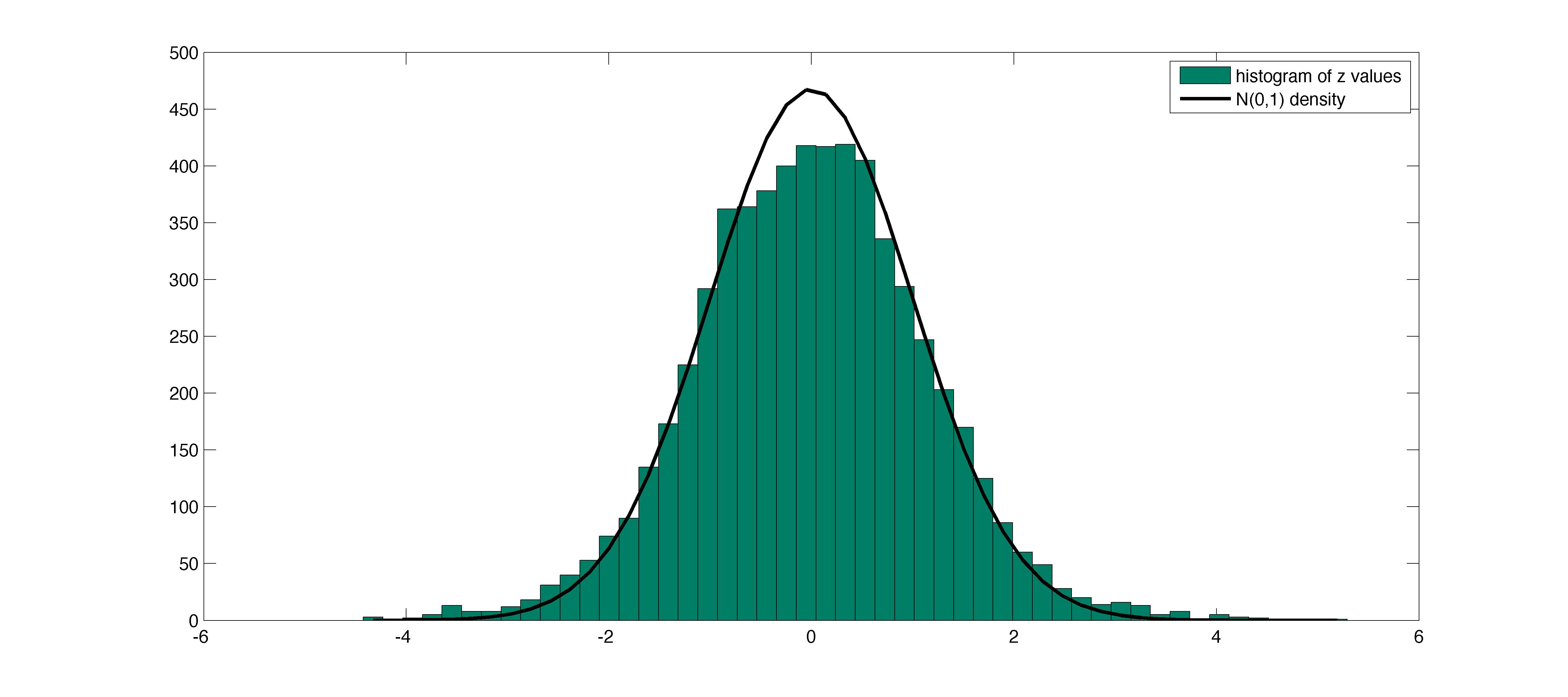} 
\caption{Histogram of z-values}
\label{fig:hist}
\end{figure}

To apply our method, we set up a normal means model $z_i = \theta_i + \epsilon_i, i=1, \ldots, 6,033$ and assign $\theta$ a $\mbox{DL}_a$ prior. Instead of fixing $a$, we use a discrete uniform prior on $a$ supported on the interval $[1/6,000, 1/2]$, with the support points of the form $10(k + 1)/6,000, k = 0, 1, \ldots, K$. Such a fully Bayesian approach allows the data to dictate the choice of the tuning parameter $a$ which is only specified up to a constant by the theory and also avoids potential numerical issues arising from fixing $a  = 1/n$ when $n$ is large. Updating $a$ is straightforward since the full conditional distribution of $a$ is again a discrete distribution on the chosen support points.  

We implemented the Gibbs sampler in  Section \ref{sec:postcomp} for 10,000 draws discarding a burn-in of 5,000.   Mixing and convergence of the Gibbs sampler was satisfactory based on examination of trace plots, with the 5,000 retained samples having an effective sample size of 2369.2 averaged across the $\theta_i$'s. The computational time per iteration scaled approximately linearly with the dimension. The posterior mode of $a$ was at $1/20$.

In this application, we expect there to be two clusters of $| \theta_i |$s, with one concentrated closely near zero corresponding to genes that are effectively not differentially expressed and another away from zero corresponding to interesting genes for further study.  As a simple automated approach, we cluster $| \theta_i |$s at each MCMC iteration using {\em kmeans} with 2 clusters.  For each iteration, the number of non-zero signals is then estimated 
by the smaller cluster size out of the two clusters. A final estimate ($M$) of the number of non-zero signals is obtained by taking the mode over all the MCMC iterations. The $M$ largest (in absolute magnitude) entries of the posterior median are identified as
 the non-zero signals.  
%


Using the above selection scheme, our method declared 128 genes as non-null. Interestingly, out of the 128 genes, 100 are common with the ones selected by EBMed. Also all the 54 genes obtained using FDR control form a subset of the selected $128$ genes. Horseshoe is overly conservative; it selected only $1$ gene (index: 610) using the same clustering procedure; the selected gene was the one with the largest {\em effect size} (refer to Table 11.2 in  \cite{efron2010large}).

\section{Proof of Theorem 3.1}\label{sec:main_pf}

We prove Theorem \ref{thm:minimaxDL} for $a_n = 1/n$ in details and note the places where the proof differs in case of $a_n = n^{-(1+\beta)}$. Recall $\theta_{S}
: = \{ \theta_j, j \in S\}$ and for $\delta \geq 0$, 
$\mathrm{supp}_{\delta}(\theta) :=  \{ j : \abs{\theta_j} > \delta \}$. Let $E_{\theta_0}/P_{\theta_0}$ respectively indicate an expectation/probability w.r.t. the 
$\mbox{N}_n(\theta_0, \mathrm{I}_n)$ density. 

For a sequence of positive real numbers $r_n$ to be chosen later, let  $\delta_n = r_n/n$.  Define $\mathcal{D}_n =  \int \prod_{i=1}^n f_{\theta_{0i}}(y_i)/f_{\theta_i}(y_i) \, d\Pi(\theta)$.
Let
\begin{eqnarray*}
\mathcal{A}_n =  \{\mathcal{D}_n \geq e^{-4r_n^2} \bbP(\norm{\theta - \theta_0}_2 \leq 2r_n) \} 
\end{eqnarray*}
be a subset of $\sigma(y^{(n)})$, the sigma-field generated by $y^{(n)}$, as in Lemma 5.2 of \cite{castilloneedles}  such that $P_{\theta_0}(\mathcal{A}_n^c) \leq e^{-r_n^2}$. 
Let $\mathcal{S}_n$ be the collection of subsets $S \subset \{1, 2, \ldots, n\}$ such that $\abs{S} \leq Aq_n$. For each such $S$ and a positive integer $j$, let  $\{\theta^{S, j, i}:  i = 1, \ldots, N_{S,j}\}$ be a $2j r_n$ net of $\Theta_{S, j, n}=\{\theta \in \mathbb{R}^n: \mathrm{supp}_{\delta_n}(\theta) = S,  2 jr_n \leq \norm{\theta - \theta_0}_2 \leq 2(j+1)r_n \}$ created as follows. Let $\{ \phi^{S, j, i} : i = 1, \ldots, N_{S,j}\}$ be a $j r_n$ net of the $|S|$-dimensional ball $\{ \| \phi - \theta_{0S} \| \leq 2(j+1) r_n\}$; we can choose this net in a way that $N_{S, j} \leq C^{|S|}$ for some constant $C$ (see, for example, Lemma 5.2 of \cite{vershynin2010introduction}). Letting $\theta^{S, j, i}_S = \phi^{S, j, i}$ and $\theta^{S, j, i}_k = 0$ for $k \in S^c$, we show this collection indeed forms a $2 j r_n$ net of $\Theta_{j,S,n}$. To that end, fix $\theta \in \Theta_{S, j, n}$. Clearly, $\| \theta_S - \theta_{0S} \| \leq 2 (j+1) r_n$. Find $1 \leq i \leq N_{S, j}$ such that $\| \theta^{S, j, i}_S - \theta_S \| \leq j r_n$. Then, 
\begin{align*}
\norm{\theta^{S, j, i} - \theta}_2^2  = \norm{\theta^{S, j, i}_S - \theta_S }_2^2  +  \norm{\theta_{S^c}}_2^2 
\leq (jr_n)^2 + (n-q_n)r_n^2/n^2 \leq 4j^2r_n^2,
\end{align*}
proving our claim. Therefore, the union of balls $B_{S, j, i}$ of radius $2 j r_n$ centered at $\theta^{S, j, i}$ for $1 \leq i \leq N_{S, j}$ cover $\Theta_{S, j, n}$. Since $E_{\theta_0} \bbP(\abs{\mathrm{supp}_{\delta_n}(\theta)} > A q_n \mid y^{(n)})  \to 0$ by Theorem \ref{thm:compress}, it is enough to work with $ E_{\theta_0} \bbP( \theta:  \norm{\theta - \theta_0}_2 > 2 M r_n, \mathrm{supp}_{\delta_n}(\theta)  \in \mathcal{S}_n  \mid y^{(n)} )$.   Using the standard testing argument for establishing posterior convergence rates (see, for example, the proof of Proposition 5.1 in \cite{castilloneedles}), we arrive at
\small
\begin{align}\label{eq:interior}
E_{\theta_0} \bbP( \theta:  \norm{\theta - \theta_0}_2 > 2 M r_n, \mathrm{supp}_{\delta_n}(\theta) 
\in \mathcal{S}_n  \mid y^{(n)} ) \leq
\sum_{S \in \mathcal{S}_1} \sum_{j \geq M} \sum_{i =1}^{N_{S,j}} 2 \sqrt{\beta_{j,S,i}} e^{- C j^2 r_n^2},
\end{align}
 \normalsize
where 
\begin{eqnarray*}
\beta_{S, j, i} = \frac{\bbP(B_{S, j, i})}{e^{-4 r_n^2} \bbP(\theta: \norm{\theta - \theta_0}_2 < 2 r_n )}.
\end{eqnarray*}
Let $r_n^2 = q_n \log n$. The proof of Theorem \ref{thm:minimaxDL} is completed by deriving an upper bound to $\beta_{S, j, i}$ in the following Lemma \ref{lem:betaSj} akin to Lemma 5.4 in \cite{castilloneedles}. 
\begin{lemma}\label{lem:betaSj}
$\log \beta_{S, j, i} \leq |S| \log(2j) + C (|S| + |S_0|) \log n + C' r_n^2$. 
\end{lemma}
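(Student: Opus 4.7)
The identity $\log \beta_{S, j, i} = 4r_n^2 + \log \bbP(B_{S, j, i}) - \log \bbP(\norm{\theta - \theta_0}_2 \leq 2r_n)$ reduces the claim to (i) an upper bound on the numerator $\bbP(B_{S, j, i})$ and (ii) a matching lower bound on the small-ball probability around the truth. I would handle these in that order, using the product structure $\Pi(\theta) = \prod_k \Pi(\theta_k)$ of the Dirichlet--Laplace prior throughout.

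For the denominator, the Pythagorean sufficient condition (namely, $\norm{\theta_{S_0} - \theta_{0, S_0}}_2 \leq r_n$ and $\norm{\theta_{S_0^c}}_2 \leq r_n$ together imply $\norm{\theta - \theta_0}_2 \leq 2 r_n$, since $\theta_{0, S_0^c} = 0$) combined with coordinate independence yields
$$\bbP(\norm{\theta - \theta_0}_2 \leq 2r_n) \geq \bbP(\norm{\theta_{S_0} - \theta_{0, S_0}}_2 \leq r_n) \cdot \bbP(\norm{\theta_{S_0^c}}_2 \leq r_n).$$
For the \emph{signal} factor I would apply inequality \eqref{eq:lip2} of Lemma \ref{lem:prior_b} with $m = \norm{\theta_{0, S_0}}_2 + r_n \lesssim \sqrt{q_n}\log^2 n$ (using the hypothesis $\norm{\theta_0}_2^2 \leq q_n \log^4 n$) and $a_n = 1/n$ so $\log(1/a_n) = \log n$; the density lower bound $-\log \Pi_{S_0}(\theta_{S_0}) \lesssim |S_0| \log n + |S_0|^{3/4} q_n^{1/4} \log n = O(r_n^2)$ holds uniformly on the ball, and the Lebesgue volume of the $|S_0|$-ball of radius $r_n$ contributes only $O(|S_0| \log \log n) = o(r_n^2)$ to the log. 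Thus $\log \bbP(\norm{\theta_{S_0} - \theta_{0, S_0}}_2 \leq r_n) \geq -C r_n^2$. For the \emph{noise} factor I would use $\norm{\theta_{S_0^c}}_\infty \leq r_n/\sqrt{n-|S_0|}$ as a sufficient condition and bound each marginal via Lemma \ref{lem:prior_m}: since $\Gamma(1/n) \gtrsim n$, we get $\bbP(|\theta_k| > r_n/\sqrt{n-|S_0|}) \lesssim \log n/n$, and the product over $n - |S_0|$ coordinates is at least $(1 - C \log n/n)^n \geq n^{-C'}$. Combining, $-\log \bbP(\norm{\theta - \theta_0}_2 \leq 2r_n) \leq C(|S_0| \log n + r_n^2)$.

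For the numerator, since $\theta^{S, j, i}_{S^c} = 0$ we have $\norm{\theta - \theta^{S, j, i}}_2^2 = \norm{\theta_S - \theta^{S, j, i}_S}_2^2 + \norm{\theta_{S^c}}_2^2$, so by independence
$$\bbP(B_{S, j, i}) \leq \bbP(\norm{\theta_S - \theta^{S, j, i}_S}_2 \leq 2jr_n) \cdot \bbP(\norm{\theta_{S^c}}_\infty \leq 2jr_n) \leq \bbP(\norm{\theta_S - \theta^{S, j, i}_S}_2 \leq 2jr_n).$$
I would estimate the right-hand side by the Lebesgue volume of the $|S|$-dimensional ball of radius $2jr_n$ times a supremum of $\Pi_S$; by Stirling the log-volume is at most $|S| \log(2j) + C|S| \log n$, while inequality \eqref{eq:lip1} of Lemma \ref{lem:prior_b} gives $\log \Pi_S \leq C|S| \log(1/\delta_n) \leq C|S| \log n$ on the truncated region $\{\min_{k \in S} |\theta_k| \geq \delta_n := r_n/n\}$. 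The complementary singular region carries negligible prior mass thanks to the integrable singularity $\Pi(\theta) \sim c|\theta|^{a_n - 1}$ of the marginal at zero from Proposition \ref{propBes}. Combining, $\log \bbP(B_{S, j, i}) \leq |S| \log(2j) + C|S| \log n + O(1)$.

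Substituting both bounds into the identity for $\log \beta_{S, j, i}$ produces the claimed inequality. The main obstacle is the numerator estimate: the singularity of $\Pi$ at zero makes any naive ``volume-times-sup'' argument diverge, because the ball $B_{S, j, i}$ may well intersect coordinate hyperplanes. The remedy is to truncate at the threshold $\delta_n = r_n/n$ (the same threshold driving Theorem \ref{thm:compress}), use inequality \eqref{eq:lip1} on the ``regular'' part, and separately bound the prior mass of the thin ``singular'' part via the Bessel-function integrability. The case $a_n = n^{-(1+\beta)}$ is analogous, since $\log(1/a_n) = (1+\beta)\log n$ remains of order $\log n$.
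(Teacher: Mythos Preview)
Your decomposition of $\log\beta_{S,j,i}$ and your treatment of the denominator are essentially the paper's: Pythagorean split into $S_0$ and $S_0^c$, lower bound the signal factor by volume times the infimum of $\Pi_{S_0}$ via \eqref{eq:lip2}, and lower bound the noise factor coordinatewise via Lemma~\ref{lem:prior_m}. That part is fine.

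The gap is in your numerator argument. Your claim that ``the complementary singular region carries negligible prior mass thanks to the integrable singularity'' is false. Integrability of $|\theta|^{a_n-1}$ near zero only says the marginal is a proper distribution; with $a_n=1/n$ the prior actually puts almost all its mass near zero. Concretely, by Lemma~\ref{lem:prior_m} one has $\bbP(|\theta_k|>\delta_n)\lesssim \log n/n$, so $\bbP\big(\min_{k\in S}|\theta_k|<\delta_n\big)=1-\bbP(|\theta_1|>\delta_n)^{|S|}\to 1$, and when $S\cap S_0=\emptyset$ the ball $B_{S,j,i}$ (whose center has $\theta^{S,j,i}_S$ in a net of a ball around $\theta_{0S}=0$) will contain a neighborhood of the origin. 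So the ``singular'' slice of the ball has mass close to~$1$, not negligible.

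The paper avoids this issue differently: it does not bound $\bbP(B_{S,j,i})$ at all, but rather the prior mass of $B_{S,j,i}$ intersected with $\{\mathrm{supp}_{\delta_n}(\theta)=S\}$ (this is what the testing argument in \eqref{eq:interior} actually needs, since the balls are only used to cover $\Theta_{S,j,n}$). That intersection forces $|\theta_k|>\delta_n$ for all $k\in S$ by definition, so \eqref{eq:lip1} applies directly on $S$, and on $S^c$ it factors out $\bbP(|\theta_1|<\delta_n)^{n-|S|}$, which cancels against the analogous $\bbP(|\theta_1|<\delta_n)^{n-q_n}$ from the denominator. There is no separate ``singular region'' to handle.

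That said, your slip is not fatal for the lemma \emph{as stated}: since every term on the right-hand side is nonnegative and your denominator bound already gives $-\log\bbP(\norm{\theta-\theta_0}_2\le 2r_n)\le C r_n^2$, the trivial estimate $\log\bbP(B_{S,j,i})\le 0$ is enough to conclude $\log\beta_{S,j,i}\le (4+C)r_n^2\le |S|\log(2j)+C(|S|+|S_0|)\log n + C'r_n^2$. So the ``main obstacle'' you identify is not an obstacle at all; your proposed remedy for it is wrong, but no remedy is needed.
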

\begin{proof}
\begin{align}
\beta_{S, j, i}  & \leq   \frac{\bbP\big(\theta \in \mathbb{R}^n:  \mathrm{supp}_{\delta_n}(\theta) = S, \norm{\theta_S - \tilde{\theta}_S^{S,j,i}}_2 
<  2 j r_n \big) }{e^{-4 r_n^2} \bbP(\theta \in \mathbb{R}^n: \norm{\theta - \theta_0}_2 < 2 r_n )}  \notag \\
&\leq \frac{\bbP \big(\theta \in \mathbb{R}^n: \abs{\theta}_j \leq \delta_n \, \forall \, j\in S^c, \abs{\theta_j} > \delta_n \, \forall \, j \in S, 
 \norm{\theta_S - \tilde{\theta}_S^{S,j,i}}_2 
< 2 jr_n \big) }{e^{-4 r_n^2} \bbP(\theta \in \mathbb{R}^n: \norm{\theta_{S_0} - \theta_{0S_0}}_2 < r_n, \norm{\theta_{S_0^c}} < r_n  )} \notag \\
&\leq  \frac{e^{4 r_n^2} \bbP(\abs{\theta_1} < \delta_n)^{n-|S|} \bbP \big(\abs{\theta_j} > \delta_n \, \forall \, j \in S, \norm{\theta_S - \tilde{\theta}_S^{S,j,i}}_2 < 2 jr_n \big) }{\bbP( \abs{\theta_1} 
<  \delta_n  )^{n-q_n}\bbP(\norm{\theta_{S_0} - \theta_{0S_0}}_2 < r_n) }\label{eq:betasji_b}. 
\end{align}
Next, we find an upper bound to 
\begin{eqnarray}\label{eq:rnji}
R_{S,j, i} =  \frac{\bbP\big(\abs{\theta_j} > \delta_n \, \forall \, j \in S, \norm{\theta_S - \tilde{\theta}_S^{S,j,i}}_2 
< 2 jr_n \big)}{\bbP(\norm{\theta_{S_0} - \theta_{0S_0}}_2 < r_n) }. 
\end{eqnarray}
Let $v_{q}(r)$ denote the $q$-dimensional Euclidean ball of radius $r$ centered at zero and $|v_{q}(r)|$ denote its volume. For the sake of brevity, denote $v_q = |v_q(1)|$, so that $|v_q(r)| = r^q v_q$. The numerator of \eqref{eq:rnji} can be clearly bounded above by $|v_{|S|}(2 j r_n)| \sup_{|\theta_j| > \delta_n \, \forall \, j \in S} \Pi_S(\theta_S)$. Since the set $\{\norm{\theta_{S_0} - \theta_{0S_0}}_2 < r_n\}$ is contained in the ball $v_{|S_0|}(\|\theta_{0S_0}\|_2 + r_n) = \{\| \theta_{S_0} \|_2 \leq \|\theta_{0S_0}\|_2 + r_n\}$ and $\|\theta_{0S_0}\|_2 = \| \theta_0\|_2$, the denominator of \eqref{eq:rnji} can be bounded below by 
$|v_{|S_0|}(r_n)| \inf_{v_{|S_0|}(t_n)} \Pi_{S_0}(\theta_{S_0})$, where $t_n = \| \theta_0\|_2 + r_n$. Putting together these inequalities and invoking Lemma \ref{lem:prior_b}, we have
\begin{align}
R_{S,j,i} \leq \frac{ (2 j r_n)^{|S|} \,v_{|S|} \exp\big\{C |S| \log(1/\delta_n) \big\} }{r_n^{|S_0|} \, v_{|S_0|} \exp\big\{-C (|S_0| \log n + |S_0|^{3/4} t_n^{1/2} ) \big\}  }.
\end{align}
Using $v_q \asymp (2 \pi e)^{q/2} q^{-q/2 - 1/2}$ (see Lemma 5.3 in \cite{castilloneedles}) and $r_n^2 \geq q_n = |S_0|$, we can bound $\log \{r_n^{|S|} v_{|S|}/(r_n^{|S_0|} v_{|S_0|} )\}$ from above by $C(|S| \log n + r_n^2)$. Therefore, we have
\begin{align}\label{eq:rsji_2}
\log R_{S,j,i} \leq |S| \log(2j) + C\{ |S| \log n + r_n^ 2 + |S_0| \log n + |S| \log(1/\delta_n) + |S_0|^{3/4} t_n^{1/2} \}. 
\end{align}
Now, since $\norm{\theta_0}_2^2 \leq q_n \log^{4} n$ and $r_n^2 = q_n \log n$, we have $t_n \lesssim q_n^{1/2} \log^{2} n$ and hence $|S_0|^{3/4} t_n^{1/2} \lesssim q_n \log n = r_n^2$. Substituting in \eqref{eq:rsji_2}, we have
$$
\log R_{S,j,i} \leq |S| \log(2j) + C (|S| + |S_0|) \log n + C' r_n^2. 
$$
Finally, $\bbP(|\theta_1| < \delta_n)^{n-|S|}/\bbP(|\theta_1| < \delta_n)^{n-q_n} \leq \bbP(|\theta_1| < \delta_n)^{-|S|}$. Using Lemma \ref{lem:prior_m}, $\bbP(|\theta_1| < \delta_n) \geq (1 - \log n/n)$, which implies that $ \bbP(|\theta_1| < \delta_n)^{-|S|} \leq e^{\log n}$. 
\end{proof}

Substituting the upper bound for $\beta_{S,j,i}$ obtained in Lemma \ref{lem:betaSj},  and noting that $\abs{S} \leq Aq_n$ and $\abs{N_{S,j}} \leq e^{C\abs{S}}$,  the expression in the left hand side of  \eqref{eq:interior} can be bounded above by 
\begin{align*}
& 2\sum_{S \in \mathcal{S}_1} \sum_{j \geq M} \sum_{i =1}^{N_{S,j}}  \exp\{Aq_n \log(2j)/2 + C/2 (A+1)q_n \log n + C'2 r_n^2/2\} e^{- C j^2 r_n^2} \\
& \leq 2\sum_{S \in \mathcal{S}_1} \sum_{j \geq M} \exp\{C q_n + Aq_n \log(2j)/2 + C/2 (A+1)q_n \log n + C' r_n^2/2\} e^{- C j^2 r_n^2}.  
\end{align*}
Since $\abs{\mathcal{S}_1} \leq Aq_n{n \choose Aq_n} \leq Aq_ne^{Aq_n \log (ne/Aq_n)}$, it follows that $E_{\theta_0} \bbP( \theta:  \norm{\theta - \theta_0}_2 > 2 M r_n, \mathrm{supp}_{\delta_n}(\theta) 
\in \mathcal{S}_n  \mid y^{(n)} ) \to 0$ for large $M > 0$. 

When $a_n = n^{-(1+ \beta)}$, the conclusion of Lemma \ref{lem:betaSj} remains unchanged and the proof of Theorem \ref{thm:compress} does not require $q_n \gtrsim \log n$. The rest of the proof remains exactly the same.


%

\appendix 
\section*{Appendix}
%
\subsection*{\bf Proof of Proposition \ref{propWG}}

When $a = 1/n$, $\phi_j \sim \mbox{Beta}(1/n,1-1/n)$ marginally. Hence, the marginal distribution of $\theta_j$ given $\tau$ is proportional to
\begin{align*}
\int_{\phi_j=0}^1 e^{- \abs{\theta_j}/(\phi_j \tau)} \bigg(\frac{\phi_j}{1 - \phi_j} \bigg)^{1/n} \phi_j^{-2} d \phi_j. 
\end{align*}
Substituting $z = \phi_j/(1 - \phi_j)$ so that $\phi_j = z/(1+z)$, the above integral reduces to 
\begin{align*}
e^{-\abs{\theta_j}/\tau} \int_{z=0}^{\infty} e^{- \abs{\theta_j}/(\tau z)} z^{-(2 - 1/n)} dz \propto e^{- \abs{\theta_j}/\tau} \abs{\theta_j}^{1/n-1}.
\end{align*}
In the general case, $\phi_j \sim \mbox{Beta}(a, (n-1)a)$ marginally. Substituting $z = \phi_j/(1 - \phi_j)$ as before, the marginal density of $\theta_j$ is proportional to
\begin{align*}
e^{-\abs{\theta_j}/\tau} \int_{z=0}^{\infty} e^{- \abs{\theta_j}/(\tau z)} z^{-(2 - a)}  \bigg(\frac{1}{1+z}\bigg)^{na-1} dz. 
\end{align*}
The above integral can clearly be bounded below by a constant multiple of 
\begin{align*}
e^{-\abs{\theta_j}/\tau} \int_{z=0}^{1} e^{- \abs{\theta_j}/(\tau z)} z^{-(2 - a)}  dz. 
\end{align*}
The above expression clearly diverges to infinity as $|\theta_j| \to 0$ by the monotone convergence theorem. 

%

\subsection*{{\bf Proof of Theorem \ref{thm:post_comp}}}
Integrating out $\tau$, the joint posterior of $\phi \mid \theta$ has the form
\begin{align}\label{eq:marg_post_phi}
\pi(\phi_1, \ldots, \phi_{n-1} \mid \theta) \propto \prod_{j=1}^n \bigg[ \phi_j^{a - 1} \frac{1}{\phi_j}  \bigg] 
\int_{\tau = 0}^{\infty} e^{-\tau/2} \tau^{\lambda-n-1} e^{- \sum_{j=1}^n | \theta_j | / (\phi_j \tau) } d \tau. 
\end{align}
We now state a result from the theory of normalized random measures (see, for example, (36) in \cite{kruijer2010adaptive}). Suppose $T_1, \ldots, T_n$ are independent random variables with $T_j$ having a density $f_j$ on $(0, \infty)$. Let $\phi_j = T_j/T$ with $T = \sum_{j=1}^n T_j$. Then, the joint density $f$ of $(\phi_1, \ldots, \phi_{n-1})$ supported on the simplex $\mathcal{S}^{n-1}$ has the form
\begin{align}\label{eq:normalized_rep}
f(\phi_1, \ldots, \phi_{n-1}) = \int_{t=0}^{\infty} t^{n-1} \prod_{j=1}^n f_j(\phi_j t) dt ,
\end{align}
where $\phi_n = 1 - \sum_{j=1}^{n-1} \phi_j$. Setting $f_j(x) \propto \frac{1}{x^{\delta}} e^{- | \theta_j |/ x } e^{- x/2} $ in \eqref{eq:normalized_rep}, we get
\begin{align}\label{eq:nrep1}
f(\phi_1, \ldots, \phi_{n-1})  = \bigg[ \prod_{j=1}^n \frac{1}{\phi_j^{\delta}} \bigg] \int_{t=0}^{\infty} e^{-t/2} t^{n-1 - n\delta} e^{- \sum_{j=1}^n | \theta_j | / (\phi_j t) } dt.
\end{align}
We aim to equate the expression in \eqref{eq:nrep1} with the expression in \eqref{eq:marg_post_phi}. Comparing the exponent of $\phi_j$ gives us $\delta = 2 - a$. The other requirement $n - 1 - n \delta = \lambda - n - 1$ is also satisfied, since $\lambda = na$. The proof is completed by observing that $f_j$ corresponds to a $\mbox{giG}(a-1, 1, 2 | \theta_j| )$ when $\delta = 2 - a$. 

\subsection*{{\bf Proof of Proposition \ref{propBes}}}

By \eqref{eq:mingyuan}, 
$$\Pi(\theta_j) = \frac{(1/2)^a}{2 \Gamma(a)} \int_{\psi_j = 0}^{\infty} e^{-|\theta_j|/\psi_j} \psi_j^{a-2} e^{-\psi_j/2} d\psi_j
= \frac{(1/2)^a}{2 \Gamma(a)} \int_{z=0}^{\infty} e^{-z |\theta_j|} z^{-a} e^{-2/z} dz.$$
The result follows from 8.432.7 in \cite{gradshteyn1980corrected}. 

\subsection*{{\bf Proof of Lemma \ref{lem:prior_b}}}

Letting $h(x) = \log \Pi(x)$, we have $\log \Pi_S(\eta) = \sum_{1 \leq j \leq |S|} h(\eta_j) $. 

We first prove \eqref{eq:lip1}. Since $\Pi(x)$, and hence $h(x)$, is monotonically decreasing in $|x|$, and $|\eta_j| > \delta$ for all $j$, we have $\log \Pi_S(\eta) \leq |S| h(\delta)$. Using $K_{\alpha}(z) \asymp z^{-\alpha}$ for $|z|$ small and $\Gamma(a) \asymp a^{-1}$ for $a$ small, we have from \eqref{eq:dens_marg} that $\Pi(\delta) \asymp a^{-1} |\delta|^{(a - 1)}$ and hence $h(\delta) \asymp (1 - a) \log(\delta^{-1}) - \log a^{-1} + C \leq C \log(\delta^{-1})$. 

We next prove \eqref{eq:lip2}. Noting that $K_{\alpha}(z) \gtrsim e^{-z}/z$ for $|z|$ large (section 9.7 of \cite{abramowitz1965handbook}), we have from \eqref{eq:dens_marg} that $-h(x) \leq \log a^{-1} + 3/2 \log |x| + \sqrt{2} \sqrt{|x|}$ for $|x|$ large. Using Cauchy--Schwartz inequality twice, we have $(\sum_{j=1}^{|S|} \sqrt{|\eta_j|})^4 \leq |S|^3 \|\eta\|_2^2$, which implies $\sum_{j=1}^{|S|} \sqrt{|\eta_j|} \leq |S|^{3/4} \|\eta\|_2^{1/2} \leq |S|^{3/4} m^{1/2}$. 

\subsection*{{\bf Proof of Lemma \ref{lem:prior_m}}}

Using the representation \eqref{eq:laplace_dir_tau}, we have $\bbP(|\theta_1| > \delta \mid \psi_1) = e^{-\delta/\psi_1}$, so that,
\begin{eqnarray}
\bbP( \abs{\theta_1} > \delta)  &=& \frac{(1/2)^{a}}{\Gamma(a)}\int_{0}^{\infty} e^{-\delta/x} x^{a-1} e^{-x/2} dx\nonumber \\
&=& \frac{(1/2)^{a}}{\Gamma(a)} \bigg\{ \int_{0}^{4 \delta} e^{-\delta/x} x^{a -1} e^{-x/2} dx  + 
\int_{4 \delta}^{\infty} e^{-\delta/x} x^{a -1} e^{-x/2} dx \bigg\} \nonumber\\
&\leq& \frac{(1/2)^{a}}  {\Gamma(a)} \bigg\{C  + \int_{4 \delta}^{\infty} \frac{e^{-x/2}}{x} dx  
 \bigg\} \leq \frac{(1/2)^{a}}  {\Gamma(a)} \bigg\{ C + \int_{2 \delta}^{\infty} \frac{e^{-t}}{t} dt \bigg\}, \label{eq:ig1}
\end{eqnarray}
where $C > 0$ is a constant independent of $\delta$. 
Using a bound for the incomplete gamma function from Theorem 2 of \cite{alzer1997some}, 
\begin{eqnarray} \label{eq:ig2}
\int_{2\delta}^{\infty} \frac{e^{-t}}{t} dt  \leq -\log (1 - e^{-2 \delta}) \leq  -\log(\delta),
\end{eqnarray}
for $\delta$ small. The proof is completed by noting that $(1/2)^a$ is bounded above by a constant and $C + \log(1/\delta) \leq 2 \log(1/\delta)$ for $\delta$ small enough.

\subsection*{{\bf Proof of Theorem \ref{thm:compress}}}
For $\theta \in \mathbb{R}^n$, let $f_{\theta}(\cdot)$ denote the probability density function of a $\mbox{N}_n(\theta, \mathrm{I}_n)$ distribution and $f_{\theta_i}$ denote the univariate marginal $\mbox{N}(\theta_i, 1)$ distribution. 
Let $S_0 =  \mathrm{supp}(\theta_0)$.  Since $|S_0| = q_n$,  it suffices to prove that 
\begin{eqnarray*}
\lim_{n \to \infty} E_{\theta_0}\bbP(\abs{\mathrm{supp}_{\delta_n}(\theta) \cap S_0^c } > A q_n \mid y^{(n)})   \to 0. 
\end{eqnarray*}
Let 
$\mathcal{B}_n = \{ \abs{\mathrm{supp}_{\delta_n}(\theta) \cap S_0^c } > A q_n \}$. By (\ref{eq:mingyuan}), 
$\{  \theta_i, i  \in S_0^c \}$ is independent of $\{  \theta_i, i\in S_0 \}$ conditionally on $y^{(n)}$. Hence 
\begin{eqnarray}\label{eq:postprob_bn}
\bbP(  \mathcal{B}_n  \mid y^{(n)})  =   \frac{ \int_{\mathcal{B}_n} \prod_{i \in S_0^c} \frac{f_{\theta_i}(y_i)}{f_0(y_i)} d\Pi(\theta_i)}{ \int  \prod_{i \in S_0^c} \frac{f_{\theta_i}(y_i)}{f_0(y_i)} d\Pi(\theta_i)} :=  \frac{\mathcal{N}'_n}{\mathcal{D}'_n}, 
\end{eqnarray}  
where $\m N'_n$ and $\m D'_n$ respectively denote the numerator and denominator of the expression in \eqref{eq:postprob_bn}. Observe that 
\begin{eqnarray}\label{eq:bd_postprob}
E_{\theta_0} \bbP(  \mathcal{B}_n  \mid y^{(n)})  \leq  E_{\theta_0} \bbP( \mathcal{B}_n  \mid y^{(n)}) 1_{\mathcal{A}_n'}   + P_{\theta_0}(\mathcal{A}_n'^c),
\end{eqnarray}
where $\mathcal{A}'_n$ is a subset of $\sigma(y^{(n)})$ as in Lemma 5.2 of \cite{castilloneedles} (replacing $\theta$ by  $\theta_{S_0^c}$ and $\theta_0$ by $0$) defined as
\begin{eqnarray*}
\mathcal{A}'_n =  \{\mathcal{D}'_n \geq e^{-r_n^2} \bbP(\norm{\theta_{S_0^c}}_2 \leq r_n) \},
\end{eqnarray*}
with $P_{\theta_0}(\mathcal{A}_n^c)  \leq e^{-r_n^2}$ for some sequence of positive real numbers $r_n$. We set $r_n^2 = q_n $ here. With this choice,  from \eqref{eq:bd_postprob},
\begin{eqnarray}\label{eq:bd1_postprob}
 E_{\theta_0} \bbP( \mathcal{B}_n  \mid y^{(n)})  \leq \frac{\bbP(\mathcal{B}_n)}{ e^{-r_n^2} \bbP(\norm{\theta_{S_0^c}}_2 \leq r_n)} + e^{-r_n^2}. 
\end{eqnarray}
We have $\bbP(\norm{\theta_{S_0^c}}_2 \leq r_n) \geq \bbP(|\theta_j| < r_n/\sqrt{n} \, \forall \, j \in S_0^c) = \bbP(|\theta_1| < r_n/\sqrt{n})^{n-q_n}$, with the equality following from the representation in \eqref{eq:mingyuan}. Using Lemma \ref{lem:prior_m}, $\bbP(|\theta_1| < r_n/\sqrt{n}) \geq 1 - \log n/n$, implying $\bbP(\norm{\theta_{S_0^c}}_2 \leq r_n) \geq e^{- C \log n}$. Next, clearly $\bbP(\m B_n) \leq \bbP(\abs{\mathrm{supp}_{\delta_n}(\theta)} > A q_n)$. 
As indicated in Section \ref{sec:aux}, $|\mbox{supp}_{\delta_n}(\theta)| \sim \mbox{Binomial}(n, \zeta_n)$, with $\zeta_n = \bbP(|\theta_1| > \delta_n) \leq \log n/n$ in view of Lemma \ref{lem:prior_m}.  A version of Chernoff's inequality for the binomial distribution \cite{hagerup1990guided} states that for $B \sim \mbox{Binomial}(n,\zeta)$ and $\zeta \leq a < 1$,
\begin{align}\label{eq:chernoff}
\bbP(B > an) \leq \bigg\{ \bigg(\frac{\zeta}{a}\bigg)^a e^{a-\zeta} \bigg\}^n. 
\end{align}
When $a_n = 1/n$, $q_n \geq C_0 \log n$ for some constant $C_0 > 0$.  Setting $a_n = A q_n/n$, clearly $\zeta_n < a_n$ for some $A > 1/C_0$. Substituting in \eqref{eq:chernoff}, we have $\bbP(\abs{\mathrm{supp}_{\delta_n}(\theta)} > A q_n) \leq 
e^{Aq_n\log (e \log n) - Aq_n \log A q_n }$.  Choosing $A \geq 2e/C_0$ and using the fact that $q_n \geq C_0 \log n$, we obtain $\bbP(\abs{\mathrm{supp}_{\delta_n}(\theta)} > A q_n) \leq e^{-Aq_n\log 2 }$.  Substituting the bounds for $\bbP(\m B_n)$ and $\bbP(\norm{\theta_{S_0^c}}_2 \leq r_n)$ in \eqref{eq:bd1_postprob} and choosing larger $A$ if necessary, the expression in \eqref{eq:bd_postprob} goes to zero.

If $a_n = n^{-(1+ \beta)}$, $ \zeta_n  \leq \log n/n^{1+\beta}$ in view of Lemma \ref{lem:prior_m}.  In \eqref{eq:chernoff}, set $a_n = A q_n/n$ as before. Clearly $\zeta_n < a_n$. Substituting in \eqref{eq:chernoff}, we have $\bbP(\abs{\mathrm{supp}_{\delta_n}(\theta)} > A q_n) \leq e^{-C A q_n \log n}$. Substituting the bounds for $\bbP(\m B_n)$ and $\bbP(\norm{\theta_{S_0^c}}_2 \leq r_n)$ in \eqref{eq:bd1_postprob}, the expression in \eqref{eq:bd_postprob} goes to zero. 

%
%
%

\bibliographystyle{plain}
\bibliography{cov_refs}

\begin{thebibliography}{10}

\bibitem{abramowitz1965handbook}
M.~Abramowitz and I.A. Stegun.
\newblock {\em Handbook of mathematical functions: with formulas, graphs, and
  mathematical tables}, volume~55.
\newblock Dover publications, 1965.

\bibitem{alzer1997some}
H.~Alzer.
\newblock On some inequalities for the incomplete gamma function.
\newblock {\em Mathematics of Computation}, 66(218):771--778, 1997.

\bibitem{armagan2011generalized}
A.~Armagan, D.~Dunson, and J.~Lee.
\newblock Generalized double pareto shrinkage.
\newblock {\em Arxiv preprint arxiv:1104.0861}, 2011.

\bibitem{benjamini1995controlling}
Yoav Benjamini and Yosef Hochberg.
\newblock Controlling the false discovery rate: a practical and powerful
  approach to multiple testing.
\newblock {\em Journal of the Royal Statistical Society. Series B
  (Methodological)}, pages 289--300, 1995.

\bibitem{bontemps2011bernstein}
D.~Bontemps.
\newblock Bernstein--von mises theorems for gaussian regression with increasing
  number of regressors.
\newblock {\em The Annals of Statistics}, 39(5):2557--2584, 2011.

\bibitem{carvalho2009handling}
C.M. Carvalho, N.G. Polson, and J.G. Scott.
\newblock Handling sparsity via the horseshoe.
\newblock {\em Journal of Machine Learning Research W\&CP}, 5(73-80), 2009.

\bibitem{carvalho2010horseshoe}
C.M. Carvalho, N.G. Polson, and J.G. Scott.
\newblock The horseshoe estimator for sparse signals.
\newblock {\em Biometrika}, 97(2):465--480, 2010.

\bibitem{castilloneedles}
I.~Castillo and A.~van~der Vaart.
\newblock Needles and straws in a haystack: Posterior concentration for
  possibly sparse sequences.
\newblock {\em The Annals of Statistics}, 40(4):2069--2101, 2012.

\bibitem{donoho1992maximum}
D.L. Donoho, I.M. Johnstone, J.C. Hoch, and A.S. Stern.
\newblock Maximum entropy and the nearly black object.
\newblock {\em Journal of the Royal Statistical Society. Series B
  (Methodological)}, pages 41--81, 1992.

\bibitem{efron2004least}
B.~Efron, T.~Hastie, I.~Johnstone, and R.~Tibshirani.
\newblock Least angle regression.
\newblock {\em The Annals of statistics}, 32(2):407--499, 2004.

\bibitem{efron2008microarrays}
Bradley Efron.
\newblock Microarrays, empirical bayes and the two-groups model.
\newblock {\em Statistical Science}, pages 1--22, 2008.

\bibitem{efron2010large}
Bradley Efron.
\newblock {\em Large-scale inference: empirical Bayes methods for estimation,
  testing, and prediction}, volume~1.
\newblock Cambridge University Press, 2010.

\bibitem{ghosal1999asymptotic}
S.~Ghosal.
\newblock Asymptotic normality of posterior distributions in high-dimensional
  linear models.
\newblock {\em Bernoulli}, 5(2):315--331, 1999.

\bibitem{gradshteyn1980corrected}
IS~Gradshteyn and IM~Ryzhik.
\newblock Corrected and enlarged edition.
\newblock {\em Tables of Integrals, Series and ProductsAcademic Press, New
  York}, 1980.

\bibitem{griffin2010inference}
J.E. Griffin and P.J. Brown.
\newblock Inference with normal-gamma prior distributions in regression
  problems.
\newblock {\em Bayesian Analysis}, 5(1):171--188, 2010.

\bibitem{hagerup1990guided}
Torben Hagerup and Christine R{\"u}b.
\newblock A guided tour of chernoff bounds.
\newblock {\em Information processing letters}, 33(6):305--308, 1990.

\bibitem{hans2011elastic}
C.~Hans.
\newblock Elastic net regression modeling with the orthant normal prior.
\newblock {\em Journal of the American Statistical Association},
  106(496):1383--1393, 2011.

\bibitem{johnstone2004needles}
I.M. Johnstone and B.W. Silverman.
\newblock Needles and straw in haystacks: Empirical bayes estimates of possibly
  sparse sequences.
\newblock {\em The Annals of Statistics}, 32(4):1594--1649, 2004.

\bibitem{johnstone2005ebayesthresh}
I.M. Johnstone and B.W. Silverman.
\newblock Ebayesthresh: R and s-plus programs for empirical bayes thresholding.
\newblock {\em J. Statist. Soft}, 12:1--38, 2005.

\bibitem{kruijer2010adaptive}
W.~Kruijer, J.~Rousseau, and A.~van~der Vaart.
\newblock Adaptive bayesian density estimation with location-scale mixtures.
\newblock {\em Electronic Journal of Statistics}, 4:1225--1257, 2010.

\bibitem{leng2010variable}
C.~Leng.
\newblock Variable selection and coefficient estimation via regularized rank
  regression.
\newblock {\em Statistica Sinica}, 20(1):167, 2010.

\bibitem{meinshausen2009lasso}
N.~Meinshausen and B.~Yu.
\newblock Lasso-type recovery of sparse representations for high-dimensional
  data.
\newblock {\em The Annals of Statistics}, 37(1):246--270, 2009.

\bibitem{naveen2013}
N.N Narisetty and X.~He.
\newblock Bayesian variable selection with shrinking and diffusing priors.
\newblock {\em University of Michigan technical reports}, 2013.

\bibitem{negahban2010unified}
S.~Negahban, P.~Ravikumar, M.J. Wainwright, and B.~Yu.
\newblock A unified framework for high-dimensional analysis of $ m $-estimators
  with decomposable regularizers.
\newblock {\em arXiv preprint arXiv:1010.2731}, 2010.

\bibitem{park2008bayesian}
T.~Park and G.~Casella.
\newblock The bayesian lasso.
\newblock {\em Journal of the American Statistical Association},
  103(482):681--686, 2008.

\bibitem{polson2010shrink}
N.G. Polson and J.G. Scott.
\newblock {Shrink globally, act locally: Sparse Bayesian regularization and
  prediction}.
\newblock In {\em Bayesian Statistics 9 (J.M. Bernardo, M.J. Bayarri, J.O.
  Berger, A.P. Dawid, D. Heckerman, A.F.M. Smith and M. West, eds.)}, pages
  501--538. Oxford University Press, New York, 2010.

\bibitem{raskutti2011minimax}
G.~Raskutti, M.J. Wainwright, and B.~Yu.
\newblock Minimax rates of estimation for high-dimensional linear regression
  over $l_q$ balls.
\newblock {\em Information Theory, IEEE Transactions on}, 57(10):6976--6994,
  2011.

\bibitem{scott2010bayes}
J.G. Scott and J.O. Berger.
\newblock Bayes and empirical-bayes multiplicity adjustment in the
  variable-selection problem.
\newblock {\em The Annals of Statistics}, 38(5):2587--2619, 2010.

\bibitem{tibshirani1996regression}
R.~Tibshirani.
\newblock Regression shrinkage and selection via the lasso.
\newblock {\em Journal of the Royal Statistical Society. Series B
  (Methodological)}, pages 267--288, 1996.

\bibitem{van2008high}
S.A. Van~de Geer.
\newblock High-dimensional generalized linear models and the lasso.
\newblock {\em The Annals of Statistics}, 36(2):614--645, 2008.

\bibitem{vershynin2010introduction}
R.~Vershynin.
\newblock Introduction to the non-asymptotic analysis of random matrices.
\newblock {\em Arxiv preprint arxiv:1011.3027}, 2010.

\bibitem{wang2007unified}
H.~Wang and C.~Leng.
\newblock Unified lasso estimation by least squares approximation.
\newblock {\em Journal of the American Statistical Association},
  102(479):1039--1048, 2007.

\bibitem{west1987scale}
M.~West.
\newblock On scale mixtures of normal distributions.
\newblock {\em Biometrika}, 74(3):646--648, 1987.

\bibitem{zhang2008sparsity}
C.H. Zhang and J.~Huang.
\newblock Model selection consistency of the lasso selection in
  high-dimensional linear regression.
\newblock {\em The Annals of Statistics}, 36(4):1567--1594, 2008.

\bibitem{zhao2007model}
P.~Zhao and B.~Yu.
\newblock On model selection consistency of lasso.
\newblock {\em Journal of Machine Learning Research}, 7(2):2541--2567, 2006.

\bibitem{zhou2012negative}
M.~Zhou and L.~Carin.
\newblock Negative binomial process count and mixture modeling.
\newblock {\em arXiv preprint arXiv:1209.3442}, 2012.

\bibitem{zou2006adaptive}
H.~Zou.
\newblock The adaptive lasso and its oracle properties.
\newblock {\em Journal of the American Statistical Association},
  101(476):1418--1429, 2006.

\end{thebibliography}

\end{document}